\documentclass[11pt]{amsart}
\usepackage[a4paper,margin=1.2in]{geometry}
\usepackage{amsmath,amssymb,amsthm}
\usepackage{xcolor}
\usepackage[colorlinks=true,linkcolor=blue,citecolor=red,urlcolor=blue]{hyperref}

\newtheorem{theorem}{Theorem}

\newtheorem{obs}[theorem]{Observation}
\newcommand{\titex}[1]{\textit{#1}}
\newtheorem{lemma}[theorem]{Lemma}

\newtheorem{conjecture}[theorem]{Conjecture}

\usepackage{xcolor}
\usepackage[bottom]{footmisc}

\begin{document}

\title{Restricting Odd Antiholes in Critical Graphs via List Coloring}
 \author{Alireza Fiujlaali}
    \author{Jessica McDonald}
    \address{Department of Mathematics and Statistics, Georgia State University, Atlanta, GA USA 30303.}
    \email{afiujlaali1@gsu.edu}
    \address{Department of Mathematics and Statistics,  Auburn University, Auburn, AL 36849. Supported in part by Simons Foundation Grant \#845698 and NSF grant DMS-2452103.}
    \email{mcdonald@auburn.edu}
\date{}
\keywords{Critical graph, odd antihole,  list coloring,}
\subjclass[2020]{05C15, 05C17}

\begin{abstract}
By the Strong Perfect Graph Theorem, every graph whose chromatic number exceeds its clique number contains an odd hole or an odd antihole. We investigate the structure of odd antiholes in critical graphs with fixed maximum degree. We prove that if $G$ is a $k$-critical graph with maximum degree $\Delta$ then every odd antihole in $G$ has length at most $2(\Delta-k)+7$, and conjecture that, in the odd-hole-free setting, equality forces the graph itself to be the corresponding odd antihole.  We verify this extremal characterization when $\Delta\geq 2k-5$, extending recent results for $\Delta=k$, and construct $k$-critical odd-hole-free graphs showing that the next possible odd antihole length $2(\Delta-k)+5$ below the extremal bound can occur. Our proofs use a list-coloring theorem for graphs on $2\ell-1$ vertices whose complements are hypomatchable. This motivates the conjecture that every odd antihole $\overline{C_{2\ell-1}}$ is strongly $\ell$-chromatic-choosable; we show that this conjecture would imply the full extremal characterization. We also obtain degree and attachment restrictions for odd antiholes in critical graphs.
\end{abstract}

\maketitle

\section{Introduction}

In this paper all graphs are finite and simple. For terms not defined here we refer the reader to \cite{West}.

A \titex{hole} in a graph is an induced cycle of length at least 4, and an \emph{antihole}
is an induced copy of $\overline{C_t}$ (the complement of $C_t$) for some $t\geq 4$. We say that $t$ is the length of the antihole $\overline{C_t}$; holes and antiholes are \emph{odd} if their length is odd. Since $\overline{C_5}=C_5$, the graph $C_5$ is both an odd hole and an odd antihole, but otherwise the concepts are distinct, and they are closely tied to chromatic graph theory via the \emph{Strong Perfect Graph Theorem}. All graphs $G$ trivially satisfy $\chi(G)\geq \omega(G)$, where $\chi(G), \omega(G)$ are the \emph{chromatic number} and \emph{clique number} of $G$, respectively; the Strong Perfect Graph Theorem of Chudnovsky, Robertson, Seymour and Thomas \cite{SPGT} implies that when $\chi(G)>\omega(G)$, $G$ must contain an odd hole or odd antihole. In this paper we investigate odd antiholes in \emph{critical} graphs, that is, graphs $G$ where $\chi(G-v)<\chi(G)$ for every $v\in V(G)$, and find that they are quite restricted. As a first step towards this consider the following observation. Here, by \emph{$k$-critical} we mean a critical graph with chromatic number equal to $k$.

\begin{obs}\label{obs:2k-1} Let $G$ be a $k$-critical graph. Then the length of any odd antihole is at most $2k-1$, and $G=\overline{C_{2k-1}}$ is the unique graph achieving this bound.
\end{obs}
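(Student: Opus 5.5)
The plan is to compute the chromatic number of an odd antihole and then use the fact that a proper subgraph of a $k$-critical graph is $(k-1)$-colorable. First, for $t=2\ell-1\geq 5$ we have $\chi(\overline{C_{t}})=\ell$. A color class in $\overline{C_t}$ is an independent set, hence a clique in $C_t$, so it has at most $2$ vertices. This gives $\chi(\overline{C_t})\geq \lceil t/2\rceil=\ell$. For the reverse inequality, pair up consecutive vertices of $C_t$ (these pairs are non-adjacent in $\overline{C_t}$) to use $\ell-1$ colors on $2\ell-2$ vertices, and give the last vertex a new color. Thus $\chi(\overline{C_{2\ell-1}})=\ell$.

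Now let $G$ be $k$-critical and contain an induced $H=\overline{C_{2\ell-1}}$. Since $H\subseteq G$, we get $\ell=\chi(H)\leq \chi(G)=k$, so $2\ell-1\leq 2k-1$, which is the bound.

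For uniqueness, suppose $\ell=k$, so $\chi(H)=k$. Criticality implies that every proper subgraph of $G$ has chromatic number less than $k$. For vertex-deletions this is the definition; for an arbitrary proper induced subgraph $G'$, pick $v\notin V(G')$ and note $\chi(G')\leq\chi(G-v)<k$. If $V(H)\neq V(G)$, then $H$ is contained in some $G-v$, so $\chi(H)\leq k-1$, a contradiction. Hence $V(H)=V(G)$, and since $H$ is induced, $G=H=\overline{C_{2k-1}}$.

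Conversely, we must check that $\overline{C_{2k-1}}$ is in fact $k$-critical; this is the only mildly nontrivial step. Deleting a vertex leaves the complement of a path $P_{2k-2}$. Pairing consecutive vertices of that path into $k-1$ non-adjacent pairs gives a $(k-1)$-coloring. So $\chi$ drops on every vertex deletion, and $\overline{C_{2k-1}}$ is $k$-critical.

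The case $k\leq 2$ is vacuous, since odd antiholes have length at least $5$. No real obstacle is expected; the key point is the elementary computation of $\chi(\overline{C_{2\ell-1}})$.
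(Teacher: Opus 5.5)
Your proposal is correct and takes the same approach as the paper, which says only that the observation follows from $\chi(\overline{C_{2s-1}})=s$ and the $s$-criticality of $\overline{C_{2s-1}}$. You supply the elementary proofs of both facts, plus the standard argument that a $k$-critical graph cannot properly contain a $k$-chromatic induced subgraph.
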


This observation follows immediately from the facts that $\chi(\overline{C_{2s-1}})=s$ and $\overline{C_{2s-1}}$ is $s$-critical for all $s\geq 3$. In this paper we seek an analog of Observation \ref{obs:2k-1} for graphs with fixed maximum degree; we prove and conjecture as follows.

\begin{theorem}\label{thm:length} Let $G$ be a $k$-critical graph with maximum degree $\Delta$. Then the length of any odd antihole in $G$ is at most $2(\Delta-k)+7$.
\end{theorem}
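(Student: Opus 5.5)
Let $H=\overline{C_{2\ell-1}}$ be an odd antihole in $G$ with $2\ell-1\geq 2(\Delta-k)+9$, that is, $\ell\geq \Delta-k+5$. We aim for a contradiction by showing that any $(k-1)$-coloring of $G-V(H)$ extends to $H$. Such a coloring exists: $G$ is $k$-critical and $V(H)\neq V(G)$ may be assumed, since otherwise $G=\overline{C_{2\ell-1}}$ with $k=\ell$, $\Delta=2\ell-4$, and the bound $2\ell-1\le 2(\ell-4)+\ldots$ can be checked directly (it holds with room, since $2(\Delta-k)+7=2\ell-1$). Extending such a coloring to $G$ would contradict $\chi(G)=k$.

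\textbf{Lists.} Fix a $(k-1)$-coloring $\varphi$ of $G-V(H)$. For $v\in V(H)$ let $L(v)$ be the set of colors in $\{1,\dots,k-1\}$ not used by $\varphi$ on $N(v)\setminus V(H)$. Each vertex of $H$ has degree $2\ell-4$ inside $H$, so it has at most $\Delta-(2\ell-4)$ neighbours outside $H$. Hence
\[
|L(v)|\ge k-1-\Delta+2\ell-4=2\ell-(\Delta-k)-5\geq \ell ,
\]
using $\ell\ge \Delta-k+5$. So every vertex of $H$ gets a list of size at least $\ell=\chi(H)$.

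\textbf{List coloring.} It remains to show that $H$ is $L$-colorable whenever every list has size at least $\ell$, which is chromatic-choosability of $\overline{C_{2\ell-1}}$. The complement $C_{2\ell-1}$ is hypomatchable (deleting any vertex leaves a path on an even number of vertices, which has a perfect matching), and $H$ has $2\ell-1$ vertices. This is exactly the setting of the list-coloring theorem mentioned in the abstract: graphs on $2\ell-1$ vertices with hypomatchable complement are $\ell$-choosable. I would invoke that result.

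If it has to be proved, I would use Ohba-type reasoning and Noel--Reed--Wu's theorem that graphs on at most $2\chi+1$ vertices are chromatic-choosable. Here $|V(H)|=2\ell-1\le 2\ell+1$, so $H$ is $\ell$-choosable directly. With this step the argument is complete, and in fact it has slack in the list sizes. That slack suggests the paper's bound is sharper in its constant than what I derive, or that I should recheck the degree count.

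\textbf{Main obstacle.} The only nontrivial ingredient is the choosability statement. Citing Noel--Reed--Wu (the former Ohba conjecture) makes it immediate; proving it from scratch via the hypomatchable structure, for instance by a Hall-type matching of vertex pairs to common colors, is the hard part. A secondary care point is the degenerate case $V(H)=V(G)$, handled by Observation~\ref{obs:2k-1}.
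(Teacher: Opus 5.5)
Your proof is correct and yields exactly the paper's bound, but the key list-coloring step is different. Both arguments extend a $(k-1)$-coloring of $G-V(A)$ using residual lists on the antihole. The paper applies Lemma~\ref{lem:xy} with $X=V(A)\setminus\{u\}$, which amounts to Lemma~\ref{lem:listmatch} with $S=\{u\}$. There $u$ needs only a nonempty list, the other $2t$ vertices need lists of size $t+1$, and the coloring is built by hand from K\H{o}nig's and Hall's theorems. You instead bound every antihole vertex's degree by $\Delta$, obtain uniform lists of size $\ell$, and cite Theorem~\ref{thm:Ohba}. For Theorem~\ref{thm:length} alone uniform lists suffice, so your route is shorter. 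The paper's route avoids the Noel--Reed--Wu theorem, and its non-uniform list hypothesis (small lists on $S$, large lists on $X$) is exactly what it reuses for Theorem~\ref{thm:degrees}. You also do not need Noel--Reed--Wu at all: $A\subseteq K_{1,2,\dots,2}$ with $\ell$ parts, and the short Hall argument in the last paragraph of the proof of Lemma~\ref{lem:listmatch} already gives $\ell$-choosability. What you call the ``hard part'' is the easy part.

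A few corrections.
\begin{itemize}
\item The result ``mentioned in the abstract'' is Theorem~\ref{thm:list}. It concerns $(\ell-1)$-assignments drawn from a common $\ell$-set, not $\ell$-choosability, so it is not the statement you invoke. Your fallback to Theorem~\ref{thm:Ohba} is what actually carries the proof.
\item Your worry about slack is unfounded. At $\ell=\Delta-k+5$ your inequality $2\ell-(\Delta-k)-5\ge\ell$ holds with equality. At $\ell=\Delta-k+4$ the same count gives only lists of size $\ell-1$, which can fail (identical lists).
\item The bound is attained by $G=\overline{C_{2\ell-1}}$, so no sharper constant is possible.
\item That degenerate case gives equality, not ``room''. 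It also needs no separate treatment: there $k=\ell$ and $\Delta=2\ell-4$, so your hypothesis $\ell\ge\Delta-k+5=\ell+1$ already fails.
\end{itemize}
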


\begin{conjecture}\label{conj:main}  Let $G$ be an odd-hole-free  $k$-critical graph with maximum degree $\Delta$. If $G$ contains an odd antihole of length $2(\Delta-k)+7$, then  $G=\overline{C_{2(\Delta-k)+7}}$.
\end{conjecture}

\begin{theorem}\label{thm:2k-5} Conjecture \ref{conj:main} is true when  $\Delta\leq k$ and when $\Delta \geq 2k-5$.
\end{theorem}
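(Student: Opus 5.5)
The plan is to split according to the two ranges of $\Delta$. In both, we compare the target length $L=2(\Delta-k)+7$ with the absolute bound $2k-1$ from Observation~\ref{obs:2k-1}.

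\emph{The case $\Delta\le k$.} A $k$-critical graph has minimum degree at least $k-1$, so $\Delta\in\{k-1,k\}$.
\begin{itemize}
\item If $\Delta=k-1$, then $L=5$. An odd antihole of length $5$ is a $C_5$, which is also an odd hole. This contradicts odd-hole-freeness, so the statement is vacuous. (Alternatively, Brooks' theorem makes $G$ complete or an odd cycle directly.)
\item If $\Delta=k$, then $L=7$, and the claim is exactly the recent result for $\Delta=k$ mentioned in the abstract, which we would quote. If we need to reprove it, we would run the same argument as in the main case below: each vertex of the $\overline{C_7}$ has degree $4$ inside it and at most $k-4$ neighbours outside. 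Then we would extend a $(k-1)$-colouring of the rest of $G$ using the list-colouring theorem.
\end{itemize}

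\emph{The case $\Delta\ge 2k-5$.} Here $L\ge 2k-3$. By Observation~\ref{obs:2k-1} we also have $L\le 2k-1$, and $L$ is odd, so $L\in\{2k-3,2k-1\}$.
\begin{itemize}
\item If $L=2k-1$, Observation~\ref{obs:2k-1} gives $G=\overline{C_{2k-1}}$ immediately.
\item If $L=2k-3$, then $\Delta=2k-5$. Let $H\cong\overline{C_{2k-3}}$ be the antihole and set $\ell=k-1$. Since $\chi(H)=\ell=k-1$, the graph $G$ is not $H$, so we must derive a contradiction. Each vertex of $H$ has degree $2k-6$ in $H$, hence at most one neighbour in $G-V(H)$.
\end{itemize}

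\emph{Extending a colouring to $H$.} By criticality, $G-V(H)$ (a proper subgraph) has a proper $(k-1)$-colouring. Each $v\in V(H)$ then has an available list $L(v)\subseteq\{1,\dots,k-1\}$ with $|L(v)|\ge k-2$. Moreover, $|L(v)|=k-1$ whenever $v$ has no outside neighbour. The graph $H$ has $2\ell-1$ vertices and its complement $C_{2\ell-1}$ is hypomatchable. So we would apply the paper's list-colouring theorem for such graphs. We expect its hypothesis to be roughly this: lists drawn from $\ell$ colours, each of size at least $\ell-1$, with some "deficiency" condition on the vertices of short lists. If the theorem applies, $H$ can be coloured from these lists, which gives a $(k-1)$-colouring of $G$ and the desired contradiction.

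\emph{Main obstacle.} The hard step is verifying the theorem's hypothesis in the bad configurations, where many vertices of $H$ lose a colour and those lost colours conspire. An example is when the lost colours are forced along a near-perfect matching of $C_{2\ell-1}$. Here we use odd-hole-freeness and the freedom to choose the colouring of $G-V(H)$.
\begin{itemize}
\item First, an outside vertex $x$ adjacent to two vertices of $H$ that are consecutive on the complementary cycle, together with suitable antihole vertices, tends to create an induced odd cycle. This restricts which pairs of $H$-vertices can share an outside neighbour or receive the same blocked colour.
\item Second, if the lists are still bad, we would permute colour classes on $G-V(H)$. Alternatively, we would recolour a single outside neighbour $x$; its degree is at most $2k-5$, so it has slack. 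The goal is that the blocked colours are not all equal on a structure forbidden by the list theorem.
\end{itemize}
We expect this case analysis to be where essentially all the work lies. We would first try to show that, after recolouring, at most one colour is blocked overall, which should suffice for the list theorem.
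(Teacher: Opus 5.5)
Your reduction is sound and matches the paper in substance. The $\Delta\le k$ cases are handled as in the paper. Using Observation~\ref{obs:2k-1} to force $L\in\{2k-3,2k-1\}$ does the same job as the paper's count $k\ge t+2$. Either way you are left with $\Delta=2k-5$ and $\ell=k-1$, with at most one outside neighbour per vertex of $H$, and with residual lists of size at least $\ell-1$ inside the common $\ell$-palette $\{1,\dots,k-1\}$.

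However, the proof stops exactly where the real work begins. The list theorem you need (Theorem~\ref{thm:list}) has no ``deficiency condition''. It colours $H$ from any $(\ell-1)$-subsets of a common $\ell$-set unless those subsets are all identical. So choose $(\ell-1)$-subsets $L'(v)\subseteq L(v)$ that are not all equal whenever possible. The only configuration that survives is this: every $v\in V(H)$ has exactly one outside neighbour $z_v$, and all the $z_v$ receive the same colour $c$. Your stated target, ``at most one colour is blocked overall'', is therefore not sufficient for the list theorem; it is precisely the obstruction. The near-perfect-matching configurations you worry about are already covered by Theorem~\ref{thm:list}.

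The tools you propose do not resolve this remaining case.
\begin{itemize}
\item Permuting colour classes of $G-V(H)$ keeps all the lists identical.
\item Recolouring a single $z_v$ has no guaranteed slack. The vertex $z_v$ can have up to $2k-6\ge k-2$ neighbours in $G-V(H)$, which may use all $k-2$ alternative colours.
\item The local criterion (an outside vertex seeing two consecutive vertices of $C_{2\ell-1}$) says nothing when, for example, the $z_v$ are pairwise distinct and each sees a single vertex of $H$.
\end{itemize}

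The missing idea is a Kempe-chain argument, closed off by a \emph{global} odd hole. Fix a colour $a\neq c$. If some $(a,c)$-component of $G-V(H)$ contains some but not all of the $z_v$, swap $a$ and $c$ there. The residual lists are then no longer identical, and Theorem~\ref{thm:list} colours $H$.

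Otherwise all the $z_v$ lie in a single $(a,c)$-component $B$. A vertex adjacent to all of $H$ would have degree at least $2k-3>\Delta$. So, since $H$ is connected, there are adjacent $x,y\in V(H)$ with $z_x\neq z_y$. Take a shortest $z_x$--$z_y$ path in $B$. It is induced in $G$, and it has even length at least $2$ because both ends are coloured $c$. Since $z_x$ and $z_y$ are the only outside neighbours of $x$ and $y$, closing this path with $z_x\,x\,y\,z_y$ gives an odd hole, a contradiction.

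Separately, your fallback of reproving the $\Delta=k$ case by ``the same argument'' would fail for $k>5$. There the lists need not lie in a common $4$-set, which is exactly the situation that requires Conjecture~\ref{conj:skcc}. Quoting the known result, as the paper does, is the right move.
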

 The meaningful cases of Theorem \ref{thm:2k-5} are the equality cases. Indeed, if $G$ contains an extremal odd antihole $A=\overline{C_{2(\Delta-k)+7}}$ and $G\neq A$, then criticality gives $k\geq\chi(A)+1=\Delta-k+5$, and hence $\Delta\leq2k-5$, making $2k-5$ a natural threshold.
The $\Delta<k$ case of Theorem \ref{thm:2k-5} is trivial by Brooks' Theorem, since the only graphs under consideration there are cliques and odd cycles, the former of which has no odd antihole, and the latter of which is itself an odd hole. The $\Delta=k$ case of Theorem \ref{thm:2k-5} is dealt with by the following theorem of Chen, Lan, Lin, and Zhou \cite{CLLZ} and Galindo, McDonald and Shan \cite{GMS}: \emph{If $G$ is an odd-hole-free $\Delta$-critical graph with maximum degree $\Delta$ then $G= \overline{C_7}$.}  Our above theorems and conjecture may be viewed as extending this extremal phenomenon from $k=\Delta$ to a general $k$ and $\Delta$. Note that $\overline{C_7}$ is the shortest odd antihole that is not also an odd hole, which is why the $k=\Delta$ case is particularly tidy. Note also that  Conjecture \ref{conj:main} is fully verified for $k\leq 6$ since in this case the open  range of the conjecture, namely $k<\Delta< 2k-5$, is empty.

One might wonder if the next value below the length bound of $2(\Delta-k)+7$ in Theorem \ref{thm:length} can actually be achieved, or if there is an improvement that could be gained when the extremal case is excluded. The following theorem answers this in the negative. In fact, it also answers in the negative the analogous question for Observation \ref{obs:2k-1}.

\begin{theorem}\label{thm:sharp} For every $k\geq 5$, there exists a $k$-critical odd-hole-free graph $G$ with maximum degree $\Delta=2k-4$, and containing an induced odd antihole $\overline{C_{2k-3}}=\overline{C_{2(\Delta-k)+5}}$, but no shorter induced odd antihole.
\end{theorem}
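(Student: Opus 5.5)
The plan is to construct $G$ as the complement of a sparse graph $H$, so that every requirement in Theorem \ref{thm:sharp} becomes a statement about $H$:
\begin{itemize}
\item an odd antihole in $G$ is an odd hole in $H$;
\item an odd hole in $G$ is an odd antihole in $H$;
\item $\chi(G)$ is the clique cover number $\theta(H)$;
\item $\deg_G(v)=|V(H)|-1-\deg_H(v)$.
\end{itemize}
So I will look for a graph $H$ consisting of a cycle $C=u_1u_2\cdots u_{2k-3}$ together with a small gadget of extra vertices attached to a few consecutive vertices of $C$. I will need to check five things: (i) $\theta(H)=k$; (ii) $\theta(H-v)=k-1$ for every $v$, which is criticality of $G$; (iii) $H$ has no odd antihole of length at least $7$, and no $C_5$; (iv) the only odd holes in $H$ have length exactly $2k-3$, which gives the antihole $\overline{C_{2k-3}}$ in $G$ and no shorter one; (v) $\min_v \deg_H(v)=|V(H)|-1-(2k-4)$.

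I will first work out the gadget by clique-cover counting. Along a path on $m$ vertices of $C$ one needs exactly $\lceil m/2\rceil$ cliques, and the cycle alone has $\theta(C)=k-1$. The gadget must therefore raise $\theta$ by exactly one, whichever way its vertices are absorbed into triangles with cycle vertices. It also must not raise $\theta$ once any single vertex is deleted.

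Small experiments already show the tension.
\begin{itemize}
\item One isolated extra vertex gives the join $\overline{C_{2k-3}}+K_1$. This is $k$-critical but has $\Delta=2k-3$.
\item Two extra vertices each attached to two cycle vertices give the right degrees. However, they let the cover absorb the extra vertices into triangles, so $\theta$ stays at $k-1$.
\end{itemize}
So I will try gadgets with two or three extra vertices $w_i$. Each $w_i$ will be adjacent in $H$ to one cycle vertex and to one other $w_j$, arranged so that every cover using a triangle leaves a path of odd length on $C$. The degree condition is then adjusted by choosing $|V(H)|$ so that the maximum degree of $G$ is $2k-4$, attained by the gadget vertices. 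The parity computation proceeds by cases on which cycle vertices are matched to gadget vertices, and this gives $\theta(H)=k$. For criticality, I will exhibit for each deleted vertex an explicit cover with $k-1$ cliques. Deleting a cycle vertex turns $C$ into a path with an even number of vertices, and deleting a gadget vertex frees its partner to pair with a cycle vertex.

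For the hole and antihole conditions I will use that $H$ is a cycle plus a few low-degree vertices. Every cycle in $H$ other than $C$ must pass through the gadget. I will choose the attachments so that these cycles are short and even, or have chords, which shows that $C$ is the only odd hole. For antiholes, $\overline{C_t}$ with $t\ge 7$ has minimum degree $t-3\ge 4$, so it cannot sit inside a graph of maximum degree $3$. If $H$ is kept at maximum degree at most $3$, it therefore has no odd antihole of length at least $7$, and a $C_5$ is excluded by girth as long as $2k-3>5$. The boundary case $k=5$, where $C$ itself is a $C_7$, will be checked by hand.

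The main obstacle is the gadget design. I need to hit $\theta(H)=k$, criticality, and the exact degree $2k-4$ simultaneously, while keeping $H$ free of new odd holes. I will first try three gadget vertices forming a path $w_1w_2w_3$ whose ends are attached to $u_1$ and $u_2$ respectively. If that fails I will fall back to Hajós-type gluing of two copies of $\overline{C_{2k-3}}$ and then recheck (iii) and (iv).
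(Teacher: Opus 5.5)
\textbf{There is a genuine gap: you give a search strategy, not a construction, and the gadget you commit to fails.}

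Your translation to $H=\overline{G}$ is sound. Your remark that $\Delta(H)\le 3$ excludes odd antiholes of length at least $7$ in $H$ is also a nice shortcut. The problem is the concrete candidate, the path $w_1w_2w_3$ attached to $u_1,u_2$, which fails on three counts:
\begin{itemize}
\item \emph{Degree.} You get $|V(H)|=2k$, while the unattached cycle vertices keep $\delta(H)=2$. So $\Delta(G)=|V(H)|-1-\delta(H)=2k-3$, not $2k-4$. The order of $H$ is fixed by the cycle and the gadget, so it cannot be ``adjusted'' afterwards.
\item \emph{Criticality.} This $H$ is triangle-free of even order. So $H-v$ is triangle-free on $2k-1$ vertices and $\theta(H-v)\ge k$, i.e.\ $G-v$ is not $(k-1)$-colorable and $G$ is not critical.
\item \emph{Odd holes.} $u_1w_1w_2w_3u_2$ is an induced $C_5$ in $H$, hence an odd hole $C_5=\overline{C_5}$ in $G$.
\end{itemize}
The fallback does not rescue this. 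A Haj\'os sum of two copies of the $(k-1)$-critical graph $\overline{C_{2k-3}}$ is again $(k-1)$-critical, so it cannot be $k$-critical.

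\textbf{The missing idea is the counting you set up but did not follow through.} Most cycle vertices have degree $2$ in $H$, so $\Delta(G)=2k-4$ forces exactly two extra vertices. Then $|V(H)|=2k-1$ is odd. If $H$ is also triangle-free, $\theta(H)\ge k$ is automatic, and $k$-criticality of $G$ becomes exactly hypomatchability of $H$.

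You dismissed ``two extra vertices each attached to two cycle vertices''. That choice only creates absorbing triangles when the two attachment vertices are adjacent on the cycle. The paper takes $x,y$ adjacent in $H$, with $x\sim v_1,v_3$ and $y\sim v_0,v_2$. This $H$ is triangle-free. It is hypomatchable: use the edge $xy$ when a cycle vertex is deleted, and $yv_0$ or $xv_1$ when $x$ or $y$ is deleted. It has $\Delta(H)=3$. A short case check shows that every induced odd cycle of $H$ of length at least $5$ has length exactly $2k-3$. This rules out a $C_5$ in $G$ and shows no shorter odd antihole exists. Combined with your $\Delta(H)\le 3$ observation for longer odd holes in $G$, this would give a complete proof.
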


Beyond length, another way that odd antiholes in critical graphs are restricted is that most of their vertices must have large degree. Every $k$-critical graph has minimum degree at least $k-1$ (else we can delete a low-degree vertex $v$ and extend the $(k-1)$-coloring of $G-v$ greedily to $v$). We prove the following.

\begin{theorem}\label{thm:degrees} Let $G$ be a $k$-critical graph with maximum degree $\Delta$, and suppose $G$ contains an odd antihole $\overline{C_{2t+1}}$ with $t\geq 3$ (i.e., not $\overline{C_5}$). Then the number of vertices in the antihole with degree at least $k$ in $G$ is at least $t+1$ if $t\geq 4$ and at least $t$ if $t=3$. Moreover, for $1\leq j\leq t-3$,  the number of vertices in the antihole with degree at least $k+j$ in $G$ is at least  $\min\{t-j-1, 2t+k-\Delta-2\}$.
\end{theorem}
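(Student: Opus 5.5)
We argue by contradiction: a $k$-critical graph with too few high-degree antihole vertices would let us color $G$ with $k-1$ colors. Let $A=\overline{C_{2t+1}}$ be the antihole and $S\subseteq V(A)$ a set of vertices to be chosen. Criticality gives a $(k-1)$-coloring $\varphi$ of $G-S$. For $v\in S$ let $L(v)$ be the colors in $\{1,\dots,k-1\}$ not used by $\varphi$ on $N(v)\setminus S$. Then $|L(v)|\ge k-1-(d_G(v)-d_{A[S]}(v))$, since every neighbour of $v$ outside $S$ blocks at most one color. If $A[S]$ is $L$-colorable, $\varphi$ extends and $\chi(G)\le k-1$, a contradiction. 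So in each part we choose $S$ among the low-degree vertices and verify that $A[S]$ is colorable from lists of these sizes. For this we use the list-coloring theorem announced in the abstract: a graph $H$ on $2\ell-1$ vertices whose complement is hypomatchable is $L$-colorable when all lists have size $\ell$. Short stretches of consecutive vertices of $C_{2t+1}$ have complements that are paths or perfect matchings. For those we use the greedy argument directly, or Ohba-type choosability results for complete multipartite graphs with parts of size at most $2$.

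\textbf{First part.} Suppose fewer than $t+1$ antihole vertices have degree $\ge k$, so at least $t+1$ of them have degree exactly $k-1$. Take $S$ to be the set of degree-$(k-1)$ vertices. Each $v\in S$ has $d_{A[S]}(v)$ neighbours in $S$, so $|L(v)|\ge d_{A[S]}(v)$. This degree-list situation is governed by the Gallai/Erdős–Rubin–Taylor theorem. Every block of a counterexample $A[S]$ must be a complete graph or an odd cycle. But $A[S]$ is an induced subgraph of $\overline{C_{2t+1}}$ on $\ge t+1$ vertices. One checks that such a subgraph is $2$-connected and is neither complete nor an odd cycle. The key facts are that it contains two disjoint non-edges of the cycle structure, and that an independent set in $A$ has size at most $2$. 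Hence $A[S]$ is degree-choosable, a contradiction. For $t=3$ the threshold drops to $t$, because $\overline{C_7}$ has a $4$-vertex induced subgraph, namely $\overline{P_4}=P_4$, which is a Gallai tree. One therefore needs $|S|\ge 4$, i.e.\ at most $3$ high-degree vertices. The case analysis here — checking which induced subgraphs of $\overline{C_{2t+1}}$ on $t+1$ vertices are Gallai trees — is straightforward but must be done carefully.

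\textbf{Second part.} Fix $j$ and suppose fewer than $m=\min\{t-j-1,\,2t+k-\Delta-2\}$ antihole vertices have degree $\ge k+j$. Let $B$ be the set of these high-degree vertices, so $|B|\le m-1$. We first $(k-1)$-color $G-V(A)$. Each vertex of $V(A)\setminus B$ then has list size at least $k-1-(k+j-1-2t+2)=2t-2-j$, since it has $2t-2$ neighbours in $A$. Each vertex of $B$ has list size at least $k-1-(\Delta-2t+2)=2t+k-\Delta-3\ge m-1\ge |B|$. We then color $A$ in two stages. First we color $A-B$ from its lists. Then we color $B$, choosing the colors on $A-B$ so that they cover few colors in the lists of $B$. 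To make this work we use the hypomatchable list theorem on a suitable subgraph: $A-B$ contains the complement of a path on $2t+1-|B|$ vertices, which gives enough slack. Concretely, we need $2t-2-j$ to exceed the number of vertices of $A-B$ minus its matching slack. The threshold $t-j-1$ is exactly where this counting balances. \textbf{Main obstacle:} controlling the interaction between the colors used on $A-B$ and the small lists on $B$. I expect to handle it by coloring $B$ first, which is possible because $|L(v)|\ge |B|$ for $v\in B$ and $B$ induces a subgraph of $A$. We then remove those colors from the lists of $A-B$ and show the residual lists still meet the bound $\ell$ required by the hypomatchable theorem applied to $A-B$, or to $A-B$ minus one vertex to fix parity.
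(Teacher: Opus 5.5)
Your overall strategy is the same as the paper's. For the first part, the paper quotes Gallai's theorem directly rather than re-deriving it from degree-choosability. It then checks the facts you need for $|S|\ge 5$:
\begin{itemize}
\item $A[S]$ is 2-connected, since a separation would put a $K_{1,3}$ or a $C_4$ inside $C_{2t+1}$;
\item $A[S]$ is not complete, because $|S|>t=\alpha(C_{2t+1})$;
\item $A[S]$ is not an odd cycle: $\overline{C_r}$ is $(r-3)$-regular, which forces $r=5$, and $C_{2t+1}$ has no induced $C_5$.
\end{itemize}
Your ``two disjoint non-edges'' claim is false in general and is not needed. The independence bound alone does not exclude $C_5$. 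Your $t=3$ sentence is off by one: the argument needs $|S|\ge 5$, i.e.\ at most two vertices of degree at least $k$, and that is exactly why the bound drops to $t$. For the second part, your eventual plan is precisely Lemma~\ref{lem:listmatch} applied via Lemma~\ref{lem:xy} with $X=V(A)\setminus B$: color $G-V(A)$, color $B$ first with distinct colors, then color $A-B$ from residual lists of size at least $2t-2-j-|B|\ge t$.

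The gap is in how you finish that last step. The complement of $A-B$ is the linear forest $C_{2t+1}-B$, which is not hypomatchable. You also cannot delete a vertex ``to fix parity'', since that vertex must still be colored. Already for $|B|=1$ you must color $\overline{P_{2t}}$, which has $2t$ vertices, from lists of size $t$; no ``$2\ell-1$ vertices, lists of size $\ell$'' statement covers this. Likewise, ``$A-B$ contains the complement of a path'' points the wrong way, because for coloring you need a supergraph of $A-B$.

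The missing fact is the following. We have $\alpha(C_{2t+1}-B)\le\alpha(C_{2t+1})=t$, and $C_{2t+1}-B$ is bipartite when $B\neq\emptyset$. K\H{o}nig's theorem then gives a matching whose edges and unmatched vertices form $\alpha(C_{2t+1}-B)\le t$ classes. So $A-B$ lies in a complete multipartite graph with at most $t$ parts, each of size at most $2$. Such a graph is $t$-choosable, by Erd\H{o}s--Rubin--Taylor or by the Hall argument in Lemma~\ref{lem:listmatch}. This is where $|B|\le t-j-2$ is used.

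You also need to treat $B=\emptyset$ separately. Then all lists have size at least $2t-2-j\ge t+1$, and $A$ is $(t+1)$-choosable. This is the one place your hypomatchable statement applies. That statement is true, but it is not the paper's Theorem~\ref{thm:list}, which concerns $(\ell-1)$-lists from a common $\ell$-set and is not used for this result. The paper instead first shows $D_j\neq\emptyset$, via Lemma~\ref{lem:xy} with $X=V(A)\setminus\{u\}$.
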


The threshold $k+t-3$ in Theorem \ref{thm:degrees} cannot be increased in general: $G=\overline{C_{2t+1}}$ is a $(t+1)$-critical graph where every vertex has degree exactly $2t-2=k+t-3$. 

We obtain one further structural restriction on odd antiholes, concerning how vertices outside the antihole may attach to it without creating an odd hole. We will discuss this technical result more before we prove Theorem \ref{thm:sharp}, as it will be useful there.

Our proofs of Theorems \ref{thm:length} and \ref{thm:degrees} use the notion of \emph{list-coloring}. A \titex{list assignment} for a graph $G$ assigns a set $L(v)$ of colors to every $v\in V(G)$; a graph $G$ is \titex{$L$-colorable} if it has a coloring $\varphi$ such that $\varphi(v)\in L(v)$ for every $v\in V(G)$. A graph $G$ is \emph{$\ell$-list-colorable} if it is $L$-colorable for every list-assignment $L$ with $|L(v)|\geq \ell$ for all $v\in V(G)$. The following theorem from 2015 verified a conjecture of Ohba~\cite{Ohba}.

\begin{theorem}[Noel, Reed, and Wu \cite{NRW}]\label{thm:Ohba} Every $\ell$-chromatic graph $G$ with $|V(G)|\leq 2\ell+1$ is $\ell$-list-colorable.
\end{theorem}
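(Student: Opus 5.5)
Since the statement is the Noel--Reed--Wu resolution of Ohba's conjecture, I would follow the minimal-counterexample strategy that has become standard for it. First I would make two reductions.

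\emph{Reduction to complete multipartite graphs.} If $G$ is $\ell$-chromatic, fix an $\ell$-coloring and add every edge between distinct color classes. This gives a complete $\ell$-partite graph $G'\supseteq G$ on the same vertex set. Since adding edges can only make list coloring harder, it suffices to prove that $G'$ is $\ell$-list-colorable.

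\emph{Reduction to exactly $2\ell+1$ vertices.} Adding vertices to parts keeps the graph $\ell$-partite, and colorability of the larger graph implies colorability of the original. So we may assume $|V(G)|=2\ell+1$. We may also assume the parts have size at most $3$, or handle larger parts by merging or splitting arguments.

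Now take a counterexample $(G,L)$ minimizing $\ell$, with all $|L(v)|=\ell$. The first standard lemma is the Hall-type observation due to Kierstead, and I would use it to show that $\left|\bigcup_v L(v)\right|<|V(G)|$. The reason is that otherwise we can color greedily through a system of distinct representatives: consider the bipartite graph between $V(G)$ and the colors. If it has a matching saturating $V(G)$, we obtain a proper coloring outright. If not, Hall's theorem gives a deficient set, and by minimality one colors a suitable subgraph and recurses. Next I would use minimality against parts sharing a common color. If some part $P$ of size at least $2$ has a color $c\in\bigcap_{v\in P}L(v)$, color all of $P$ with $c$ and delete $c$ from every list. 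This yields an $(\ell-1)$-partite instance with at most $2\ell-1$ vertices and lists of size at least $\ell-1$, which is colorable by minimality. The same trick works for combining two singleton parts whose lists have controlled intersection. These reductions force strong structure: most parts have pairwise "almost disjoint" lists, and there is a bound on the number of parts of each size $1$, $2$, $3$.

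The core of the proof is then to construct, in this constrained situation, a partial coloring that uses the colors of large intersections on the size-$3$ and size-$2$ parts. After that one verifies Hall's condition for the bipartite graph between the remaining vertices and the remaining colors, where each remaining vertex must receive a distinct color. Concretely, I would choose, for each part, one color appearing in many of its lists to spread over as many of its vertices as possible, with choices made greedily by frequency. I would then show that any Hall violator $S$ contradicts either $|\bigcup L|<|V|$ or the counting $|V|=2\ell+1$, $|L(v)|=\ell$. This uses the observation that a color assigned to a whole part saves one vertex of "capacity" per extra vertex colored.

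The main obstacle is exactly this last step. One needs to choose the partial coloring so that the Hall deficiency analysis closes, which in Noel--Reed--Wu requires a delicate weighting and case analysis over how many size-$3$ parts exist. I would attack it by first proving the case where all parts have size at most $2$, which follows cleanly from the matching argument. I would then induct on the number of size-$3$ parts, absorbing one size-$3$ part at a time via a carefully chosen frequent color.
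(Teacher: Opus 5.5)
The paper does not prove this statement. It quotes it from Noel, Reed and Wu. The only related argument in the paper is the final paragraph of the proof of Lemma~\ref{lem:listmatch}, which directly handles complete $q$-partite graphs with parts of size at most $2$ (the Erd\H{o}s--Rubin--Taylor case).

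Your preliminary reductions are the standard ones and are essentially sound:
\begin{itemize}
\item passing to a complete $\ell$-partite supergraph;
\item coloring a part that has a common color, then recursing on an $(\ell-1)$-partite instance with at most $2(\ell-1)+1$ vertices;
\item Kierstead's lemma $\left|\bigcup_v L(v)\right|<|V(G)|$.
\end{itemize}
One technical correction: Kierstead's recursion applies the hypothesis to $G-S$ with the \emph{same} list size $\ell$. So you must choose the counterexample with $|V(G)|$ minimum, not $\ell$ minimum. Then you cannot pad to $2\ell+1$ vertices; you must instead argue that a vertex-minimal counterexample already has exactly $2\ell+1$ vertices.

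The genuine gap is that the substantive part of the theorem is not proved, and the route you sketch for it fails.

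First, assuming parts have size at most $3$ is not a valid reduction. Splitting a part gives a graph of chromatic number $\ell+1$, which is not even $\ell$-colorable. Merging two parts deletes edges, so a coloring of the merged graph tells you nothing about $G$. ``Combining two singleton parts'' is likewise unsupported: they are adjacent and need two colors, which leaves an $(\ell-2)$-partite graph on $2(\ell-2)+3$ vertices, outside the hypothesis.

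Second, the induction on the number of size-$3$ parts has no inductive step. By your own reduction, a size-$3$ part of a minimal counterexample has no color common to all three lists, so it needs at least two colors. Deleting it leaves an $(\ell-1)$-partite graph on $2\ell-2$ vertices whose lists may have dropped to size $\ell-2$, again outside the hypothesis.

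Third, $|V(G)|=2\ell+1$ forces a part of size at least $3$. So the ``all parts of size at most $2$'' case, which indeed follows cleanly from Hall, is only the base case $|V(G)|\le 2\ell$.

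What is actually required is an explicit global choice of a partial coloring that is monochromatic within parts, together with a verification of Hall's condition for giving the remaining vertices distinct remaining colors. You describe this step only as ``greedily by frequency'' and explicitly leave the Hall analysis open. What you have is an outline of the framework of the Noel--Reed--Wu argument, not a proof.
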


If $G$ is $\ell$-chromatic then we certainly cannot hope for better than $\ell$-list-colorability, given the case of identical lists of size $\ell-1$. However, we show that, in a very special situation, other lists of size $\ell-1$ can work. Here, by \emph{hypomatchable} we mean a graph $H$ where $H-v$ has a perfect matching for every $v\in V(H)$. By $\ell$-assignment we mean a list-assignment where all lists have size exactly $\ell$.

\begin{theorem}\label{thm:list} Let $\ell\geq 3$ be an integer and let $G$ be a graph with $|V(G)|=2\ell-1$ whose complement is hypomatchable. Let $L$ be an $(\ell-1)$-assignment for $G$ where all lists are subsets of a common $\ell$-set. Then $G$ is $L$-colorable unless all lists in $L$ are identical.
\end{theorem}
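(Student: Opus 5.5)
Every list is a subset of size $\ell-1$ of the common $\ell$-set $S$, so for each $v\in V(G)$ we can write $L(v)=S\setminus\{c(v)\}$ for a unique "forbidden" color $c(v)\in S$. The hypothesis that the lists are not all identical says exactly that $c$ is not constant. For $s\in S$ let $V_s=\{v: c(v)=s\}$. Let $H=\overline{G}$. The plan is to build an $L$-coloring whose color classes are the edges of a near-perfect matching of $H$ together with one singleton; edges of $H$ are non-edges of $G$, so these classes are independent in $G$. The colors are then assigned to the classes via Hall's theorem.

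Fix a vertex $v$, to be chosen later. Since $H$ is hypomatchable, $H-v$ has a perfect matching $M$ with $\ell-1$ edges, so $\{v\}$ and the edges of $M$ give exactly $\ell=|S|$ classes. Form a bipartite graph $B$ between these classes and the colors of $S$:
\begin{itemize}
\item the class $\{v\}$ is joined to every $s\neq c(v)$;
\item a pair $\{x,y\}\in M$ is joined to every $s\notin\{c(x),c(y)\}$.
\end{itemize}
A perfect matching of $B$ is a proper $L$-coloring of $G$. Each class has degree at least $\ell-2$ in $B$, and $\{v\}$ has degree $\ell-1$, so Hall's condition holds automatically for every set of at most $\ell-2$ classes. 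It can fail in only two ways.

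\textbf{Obstruction (A):} a set of $\ell-1$ classes has only $\ell-2$ neighbours. This forces all of these classes to forbid the same two colors $a\neq b$. The singleton forbids only one color, so the set must be all of $M$, and each pair consists of one vertex of $V_a$ and one of $V_b$. Hence $c$ takes the values $a$ and $b$ exactly $\ell-1$ times each on $V(G)\setminus\{v\}$.

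\textbf{Obstruction (B):} all $\ell$ classes together miss some color $s$. Then $c(v)=s$ and every edge of $M$ meets $V_s$, so $|V_s|\ge \ell$.

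It remains to choose $v$ so that neither obstruction can occur, whatever matching $M$ we obtain. Take $v$ in a nonempty class $V_s$ of minimum size.
\begin{itemize}
\item Since $c$ is not constant, at least two classes are nonempty. Their sizes sum to $2\ell-1$, so the minimum one has size at most $\ell-1$, which rules out (B).
\item If (A) occurs with colors $a,b$, then $c(v)\notin\{a,b\}$. Indeed, if $c(v)=a$, then $|V_a|=\ell>\ell-1=|V_b|$, contradicting the minimality of $V_{c(v)}$; the case $c(v)=b$ is symmetric.
\item So in case (A) the sizes are $|V_s|=1$ and $|V_a|=|V_b|=\ell-1$. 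Re-choose $v'\in V_a$ instead. Now $|V_a|=\ell-1<\ell$, so (B) fails for $v'$. On $V(G)\setminus\{v'\}$ the multiplicities of $c$ are $\ell-2,\ \ell-1,\ 1$, so no two colors each occur exactly $\ell-1$ times, and (A) fails too.
\end{itemize}
With either choice of $v$, Hall's theorem gives a perfect matching of $B$ and hence an $L$-coloring of $G$.

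The main point needing care is the exact characterization of the Hall failures, in particular that the singleton class cannot take part in obstruction (A) because it forbids only one color. The choice of $v$ is then a short counting argument on the sizes of the classes $V_s$. Note that $\ell\ge3$ ensures $\ell-2\ge1$ throughout, and that the argument works for every perfect matching of $H-v$, so we never need to choose $M$ carefully.
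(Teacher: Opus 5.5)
Your proof is correct and takes essentially the same approach as the paper. The paper uses the same partition into a singleton plus a perfect matching of $\overline{G}-u$, the same Hall argument on the merged lists, and the same two obstructions (its type 1 and type 2 vertices). The only difference is that you choose the singleton directly (a vertex whose forbidden color is rarest, switching to a vertex of $V_a$ if needed), whereas the paper assumes every choice fails and derives a contradiction.
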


If the $G$ in Theorem \ref{thm:list} is additionally $\ell$-chromatic, then $G$ is certainly not $L$-colorable from identical lists of size $\ell-1$. So Theorem \ref{thm:list} shows that, among all list assignments of this form, identical lists are the only obstruction to $L$-colorability.

Our purpose in proving Theorem \ref{thm:list} is to apply it to the $\ell$-chromatic odd antihole $\overline{C_{2\ell-1}}$ (whose complement, a cycle, is hypomatchable), within the proof of Theorem \ref{thm:2k-5}. We conjecture that the ``common palette'' assumption of Theorem \ref{thm:list} should be removable. This is particularly appealing because we can show it would imply the remaining range of  \ref{conj:main}. A graph $G$ is said to be \emph{strongly $\ell$-chromatic-choosable} if $\chi(G)=\ell$ and every $(\ell-1)$-assignment for which $G$ is not list-colorable has the property that the lists are the same for all vertices; see Kaul and Mudrock~\cite{KM} for more on this concept.

\begin{conjecture}\label{conj:skcc} For every integer $\ell\geq 3$, the odd antihole $\overline{C_{2\ell-1}}$ is strongly $\ell$-chromatic-choosable.
\end{conjecture}

\begin{theorem}\label{thm:conjs} Conjecture \ref{conj:skcc} implies Conjecture \ref{conj:main}. In particular, if Conjecture \ref{conj:skcc} holds for $\ell$, then Conjecture \ref{conj:main} holds for every pair $\Delta, k$ with $\Delta-k=\ell-4$.  
\end{theorem}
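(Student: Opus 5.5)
Set $\ell=\Delta-k+4$, so that the extremal antihole is $A=\overline{C_{2\ell-1}}$, with $2\ell-1=2(\Delta-k)+7$. Suppose Conjecture \ref{conj:skcc} holds for $\ell$, and let $G$ be an odd-hole-free $k$-critical graph with maximum degree $\Delta$ that contains $A$ as an induced subgraph. Assume for contradiction that $G\neq A$. The remark after Theorem \ref{thm:2k-5} gives $k\geq \chi(A)+1=\ell+1$. Since $G$ is $k$-critical, $G-V(A)$ has a $(k-1)$-coloring $\varphi$ with color set $[k-1]$. For each $v\in V(A)$, define $L(v)$ to be $[k-1]$ minus the colors that $\varphi$ uses on the neighbors of $v$ outside $A$. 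If $A$ is $L$-colorable, then $\varphi$ extends to a $(k-1)$-coloring of $G$, which is a contradiction. The goal is therefore to show that $A$ is $L$-colorable.

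\textbf{Counting list sizes.} Each $v\in V(A)$ has degree $2\ell-4$ inside $A$. It therefore has at most $\Delta-(2\ell-4)=\Delta-2\ell+4$ neighbors outside $A$, so
\[|L(v)|\geq k-1-\Delta+2\ell-4=\ell-1,\]
using $\Delta-k=\ell-4$. If some $|L(v)|\geq \ell$, we may trim the lists. The cleanest way to finish is to choose subsets $L'(v)\subseteq L(v)$ of size exactly $\ell-1$ that are \emph{not} all identical. Conjecture \ref{conj:skcc} then gives an $L'$-coloring of $A$, and hence the contradiction.

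\textbf{Avoiding identical lists.} This is the main obstacle: we must rule out the situation in which every $L(v)$ is the same $(\ell-1)$-set $S$. (If some list is strictly larger, or two lists differ, we can select non-identical $(\ell-1)$-subsets, since a list of size at least $\ell$ has two distinct $(\ell-1)$-subsets.) In the bad case every vertex of $A$ has exactly $\Delta-2\ell+4$ outside neighbors, and these neighbors use exactly the colors $C=[k-1]\setminus S$, where $|C|=k-\ell=\Delta-2\ell+4$. So each $v\in V(A)$ sees every color of $C$ exactly once outside $A$, and its outside neighbors are all distinctly colored.

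Here we use the freedom in choosing $\varphi$. Fix $c\in C$ and $s\in S$, and consider a Kempe change on the $\{c,s\}$-component of $G-V(A)$ containing the $c$-colored outside neighbor $u$ of some vertex $v\in V(A)$. Vertex $v$ has no outside neighbor colored $s$, because all its outside colors lie in $C$. After the swap, $u$ has color $s$, and $c$ enters $L(v)$ in place of $s$, unless the component also contains other outside neighbors of $v$; no such neighbor has color $s$, and any other one would have to carry color $c$, which is impossible. The lists of other vertices of $A$ may also change, but $L(v)$ now contains $c\notin S$. Some vertex $w$ of $A$ avoids the component, and then $L(w)$ still equals $S$, which yields non-identical lists. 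If instead the component meets every vertex of $A$, we pick a different starting vertex or a different pair $(c,s)$. We may also bring the odd-hole-free hypothesis to bear: a $\{c,s\}$-path joining outside neighbors of two non-adjacent antihole vertices (which are consecutive on the complement cycle), together with the antihole edges, tends to close an odd hole, and this restricts how such components attach. The detailed case analysis of this step, making sure that after a suitable recoloring the lists are not all identical, is where I expect the work to lie. Once it is done, Conjecture \ref{conj:skcc} yields the $L$-coloring and completes the proof for every pair with $\Delta-k=\ell-4$.
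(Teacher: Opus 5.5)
Your reduction is right. You show the lists have size at least $\ell-1$, reduce to the bad case where every $L(v)$ equals one $(\ell-1)$-set $S$ (so each $v\in V(A)$ has exactly one outside neighbor $z_v(c)$ of each color $c\in C$ and no others), and use a Kempe swap to show that, for each $c\in C$ and $s\in S$, all the $z_v(c)$ lie in one $\{c,s\}$-component. This matches the paper. But the proposal stops where the proof's real content begins, and the fallback you offer (``pick a different starting vertex or a different pair $(c,s)$'') cannot close the gap. The starting vertex is irrelevant, since only the component matters. The configuration that must be excluded is the one where each color class $c\in C$ of outside neighbors is a single vertex $z_c$ adjacent to all of $A$. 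There, every $\{c,s\}$-swap changes \emph{all} lists to $(S\setminus\{s\})\cup\{c\}$ at once, so no swap of this kind ever gives non-identical lists. Your odd-hole hint also uses the wrong pair: for $x,y$ non-adjacent in $A$, the short connection through a common neighbor plus the even-length Kempe path closes an even cycle.

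Two ideas are missing.

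\textbf{Odd-hole step.} Take $x,y$ \emph{adjacent} in $A$ with $z_x(c)\neq z_y(c)$; by connectivity of $A$ such a pair exists unless all $z_v(c)$ coincide. Let $T$ be a shortest $z_x(c)$--$z_y(c)$ path in their common $\{c,s\}$-component. It is induced, because any chord joins a $c$-vertex to an $s$-vertex and so lies in the component. It has even length at least $2$. Since $x$ and $y$ have no outside neighbor colored $s$ and a unique one colored $c$, each sees only its own end of $T$. So $T$ together with the path $z_x(c)\,x\,y\,z_y(c)$ is an induced odd cycle of length at least $5$, which is excluded. Hence each $c\in C$ gives a single vertex $z_c$ complete to $A$, and $N_G(v)\setminus V(A)=Z=\{z_c:c\in C\}$ for every $v$.

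\textbf{Degree step.} This step uses neither Kempe changes nor Conjecture~\ref{conj:skcc}; it uses the maximum degree. With $q=k-\ell=|C|$ you have $\Delta=2\ell+q-4$, so $d_{G-A}(z_c)\le \Delta-(2\ell-1)=q-3$. This is impossible if $q\le 2$. If $q\ge 3$, fix a $(q-1)$-set of colors. Each $z\in Z$ keeps at least $d_{G[Z]}(z)+2$ of these colors after deleting the colors of its neighbors in $G-(A\cup Z)$, so $Z$ can be greedily recolored using only $q-1$ colors. Then every vertex of $A$ has the same set of at least $(k-1)-(q-1)=\ell$ available colors, and $\chi(A)=\ell$ gives the contradiction. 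Without these two steps, what you have is the reduction, not a proof.
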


The case $\ell=3$ of Conjecture \ref{conj:skcc} is the well-known fact that a cycle with lists of size 2 is colorable unless all lists are equal: choose consecutive vertices $u, v$ with $L(u)\neq L(v)$, color $u$ with a color not in $L(v)$, and color the remaining vertices greedily along the cycle, ending at $v$. This $\ell=3$ case of Conjecture \ref{conj:skcc} corresponds to the $k=\Delta+1$ case of Conjecture \ref{conj:main}, which is trivial; the $\ell=4$ case of Conjecture \ref{conj:skcc} corresponds to $k=\Delta$ for Conjecture \ref{conj:main}, which is handled by the aforementioned result of Chen et al. and Galindo et al. The first case of Conjecture \ref{conj:skcc} with new consequences is $\ell=5$: by Theorem \ref{thm:conjs}, verifying that $\overline{C_9}$ is strongly 5-chromatic-choosable would establish Conjecture \ref{conj:main} whenever $k=\Delta-1$, and in particular for $(k, \Delta)=(7,8)$, the first open case of Conjecture \ref{conj:main}.

This paper proceeds as follows. Section 2 concerns list-coloring and contains a proof of Theorem \ref{thm:list} in addition to an easier list-coloring result that we will need. We prove Theorems \ref{thm:length} and \ref{thm:degrees} 
in Section 3, using these list-coloring results. We also prove Theorem \ref{thm:conjs} in Section 3. Section 4 contains a proof of Theorem \ref{thm:sharp} as well as the neighborhood result alluded to above, neither of which require list-coloring.

\vspace*{.1in}

\section{List-coloring}

We use the following classic theorem of Hall for our list-coloring proofs. Note that in what follows, $\alpha(G)$ indicates the maximum size of an independent set in a graph $G$.

\begin{theorem}[Hall~\cite{Hall}]\label{thm:hall}
Let $A_1,\ldots,A_n$ be finite sets. There exist distinct elements $a_1,\ldots,a_n$ such that $a_i\in A_i$ for every $i$ if and only if
$
\left|\bigcup_{i\in X}A_i\right|\ge|X|
$
for every $X\subseteq\{1,\ldots,n\}$.
\end{theorem}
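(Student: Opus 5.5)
Hall's theorem is classical and the paper uses it only as a tool, so I would simply recall the standard inductive proof (Halmos--Vaughan) on $n$. Necessity is immediate: if distinct representatives $a_i\in A_i$ exist, then for any $X\subseteq\{1,\ldots,n\}$ the elements $a_i$, $i\in X$, are $|X|$ distinct members of $\bigcup_{i\in X}A_i$. For sufficiency, write $A(X)=\bigcup_{i\in X}A_i$, assume $|A(X)|\ge|X|$ for all $X$, and induct on $n$. The case $n=1$ is trivial, since $|A_1|\ge 1$.

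For $n\ge 2$ I would split into two cases according to whether some proper subfamily is \emph{tight}.

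\emph{Case 1:} every nonempty proper $X\subsetneq\{1,\ldots,n\}$ satisfies $|A(X)|\ge|X|+1$. Pick any $a_n\in A_n$ and set $A_i'=A_i\setminus\{a_n\}$ for $i<n$. For nonempty $X\subseteq\{1,\ldots,n-1\}$ we get
\[
\Bigl|\bigcup_{i\in X}A_i'\Bigr|\ge |A(X)|-1\ge |X|.
\]
So by induction the sets $A_1',\ldots,A_{n-1}'$ have distinct representatives, and these together with $a_n$ give a system of distinct representatives for the original family.

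\emph{Case 2:} some nonempty proper $Y$ has $|A(Y)|=|Y|$.
\begin{enumerate}
\item Since $|Y|<n$, the family $\{A_i\}_{i\in Y}$ satisfies Hall's condition, so induction gives distinct representatives for it. These representatives lie in $A(Y)$, and because there are $|Y|=|A(Y)|$ of them, they use up all of $A(Y)$.
\item For $i\notin Y$, set $A_i''=A_i\setminus A(Y)$. For any $Z\subseteq\{1,\ldots,n\}\setminus Y$, apply Hall's condition to $Z\cup Y$:
\[
\Bigl|\bigcup_{i\in Z}A_i''\Bigr| = |A(Z\cup Y)|-|A(Y)| \ge |Z|+|Y|-|Y| = |Z|.
\]
\item Since $|Z|<n$ here as well, induction gives distinct representatives for the sets $A_i''$ with $i\notin Y$. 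These avoid $A(Y)$, so they are disjoint from the representatives chosen in step 1, and the two systems combine into one for the whole family.
\end{enumerate}

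The only step needing any care is the computation in step 2 of Case 2, namely verifying Hall's condition for the residual family. The key point there is the identity $\bigl|\bigcup_{i\in Z}A_i''\bigr|=|A(Z\cup Y)|-|A(Y)|$, which holds because removing $A(Y)$ from $A(Z\cup Y)$ leaves exactly $\bigcup_{i\in Z}A_i''$. Everything else is routine bookkeeping. Alternatively, one could cite the result directly from \cite{Hall}, or derive it from K\H{o}nig's theorem or max-flow min-cut on the bipartite incidence graph, but the induction above is self-contained.
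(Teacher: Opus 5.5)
Your proof is correct. It is the standard Halmos--Vaughan induction, and both cases go through, including the identity $\bigl|\bigcup_{i\in Z}A_i''\bigr|=|A(Z\cup Y)|-|A(Y)|$, which holds because $A(Y)\subseteq A(Z\cup Y)$ and $Z\cap Y=\emptyset$.

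The paper gives no proof here; it simply cites Hall. It only ever uses the sufficiency direction: directly in Lemma~\ref{lem:listmatch}, and in contrapositive form in the proof of Theorem~\ref{thm:list} to extract a violating collection $X_u$. So your write-up supplies a self-contained proof of a quoted tool rather than an alternative to an argument in the paper.

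Two minor points:
\begin{itemize}
\item You need strong induction, since in Case~2 the two subfamilies have $|Y|$ and $n-|Y|$ members, which may be well below $n-1$.
\item In step~3 the relevant fact is that the residual family has $n-|Y|<n$ members because $Y\neq\emptyset$, not that $|Z|<n$.
\end{itemize}
Your remark in step~1 that the representatives exhaust $A(Y)$ is true but not needed: disjointness only uses that they lie in $A(Y)$.
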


\begin{lemma}\label{lem:listmatch}
Let $H$ be a graph, let $A=\overline{H}$, and let $\emptyset\ne S\subsetneq V(A)$ such that $H-S$ is bipartite. Let $L$ be a list assignment for $A$ with $|L(v)|\geq |S|$ for every $v\in S$ and $|L(v)|\geq |S|+\alpha(H-S)$ for all $v\in V(A)\setminus S$. Then $A$ has an $L$-coloring.
\end{lemma}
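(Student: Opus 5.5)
We colour $S$ first and then extend to the rest of $A$ via a matching argument. Since $|L(v)|\ge |S|$ for every $v\in S$, and $S$ induces some graph in $A$, we could colour $S$ greedily only if degrees were small. Instead we make all colours on $S$ distinct: apply Theorem \ref{thm:hall} to the family $\{L(v):v\in S\}$. Every subfamily indexed by $X\subseteq S$ has union of size at least $|L(v)|\ge |S|\ge |X|$. So there is a system of distinct representatives, giving an injective colouring $\varphi$ of $S$ with $\varphi(v)\in L(v)$. An injective colouring is proper regardless of which edges $A[S]$ has. This step uses only $|S|$ colours.

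Next we colour $V(A)\setminus S$. Each $v\notin S$ has at least $\alpha(H-S)$ colours in $L(v)\setminus\varphi(S)$. So it suffices to show that $A-S=\overline{H-S}$ is $L'$-colourable, where $L'(v)=L(v)\setminus\varphi(S)$ and $|L'(v)|\ge \alpha(H-S)$. Colours on $A-S$ are then automatically disjoint from those on $S$, so the combined colouring is proper.

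The key step is to colour the complement of a bipartite graph $B=H-S$ from lists of size at least $\alpha(B)$. In $\overline{B}$, a colour class must be independent, i.e. a clique of $B$. Since $B$ is bipartite, a colour class is a single vertex or an edge of $B$. So we want a matching $M$ of $B$ and an assignment of distinct colours to the parts: each edge $uv\in M$ receives a colour in $L'(u)\cap L'(v)$, each unmatched vertex a colour in its own list, and all chosen colours are distinct.

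The cleanest route is to take no edges at all, i.e. colour $A-S$ injectively, which Hall permits whenever every $X\subseteq V(B)$ satisfies $|\bigcup_{x\in X}L'(x)|\ge |X|$. This fails only if lists are small relative to $|V(B)|$. In that case we use König's theorem: $B$ has a matching $M$ with $|V(B)|-|M|=\alpha(B)$, so $B$ splits into exactly $\alpha(B)$ "parts" (edges of $M$ and single vertices). We need one distinct colour per part, but an edge part $\{u,v\}$ needs a colour common to both lists, which may not exist.

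That is the main obstacle. I would handle it by a Hall-type argument on the auxiliary family where each part $P$ gets the set $\bigcap_{x\in P}L'(x)$ or, more flexibly, by allowing the matching to be chosen adaptively. Concretely, I would prove by induction on $|V(B)|$: if some $X$ violates Hall's condition for injective colouring, then $|X|>\alpha(B)$. The union $U$ of those lists has size less than $|X|$ but at least $\alpha(B)$, and since $B[X]$ is bipartite, all lists on $X$ lie in $U$ with sizes at least $\alpha(B)\ge\alpha(B[X])$. One then colours $B[X]$ within $U$ using a König matching of $B[X]$, pigeonholing two matched endpoints onto a common colour in $U$. Finally, one removes $X$ and $U$ and applies induction, checking that $\alpha$ drops by at least the number of colours removed.

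I expect the verification of this size/independence bookkeeping in the inductive step to be the delicate part. If it fails, the fallback is a direct Hall argument on the bipartite graph joining vertices of $B$ to colours, using a deficiency version of Hall combined with König's theorem.
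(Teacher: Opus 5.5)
Your reduction is sound: colouring $S$ injectively (by Hall, or just greedily) and deleting those colours leaves lists $L'$ with $|L'(v)|\ge q:=\alpha(B)$ on $B=H-S$, and it remains to $L'$-colour $\overline{B}$. But you never prove this key claim, and the induction you sketch fails. If $X$ violates Hall's condition and $U=\bigcup_{x\in X}L'(x)$, then $|U|\ge q$. So after deleting $X$ and all of $U$, the remaining vertices may be left with empty lists, whereas $\alpha(B-X)\ge 1$ whenever $X\ne V(B)$. Your requirement that $\alpha$ drop by at least $|U|$ would force $\alpha(B-X)\le 0$. For a concrete failure, take $B=2K_2$, all lists $\{1,2\}$, and $X$ any three vertices. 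When $X=V(B)$, ``colour $X$ within $U$'' is just the original problem, so nothing has been reduced. The deficiency-Hall fallback is only named, not carried out.

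The missing idea is that the obstacle you identify is not real. Let $\{u,v\}$ be an edge of a maximum matching of $B$. Its two vertices are non-adjacent in $\overline{B}$, but they need not share a colour; they may also receive two different colours. The paper's proof uses exactly this, by induction on $q$:
\begin{itemize}
\item If some such pair has $L'(u)\cap L'(v)\ne\emptyset$, colour both with a common colour $c$ and delete $u,v$ and $c$. Deleting both ends of an edge of a maximum matching lowers $\alpha$ by exactly one, so the remaining lists of size at least $q-1$ suffice.
\item Otherwise every pair has disjoint lists, so each colour lies in the lists of at most $q$ vertices of any $X$. Hence $q|X|\le\sum_{x\in X}|L'(x)|\le q|U(X)|$, and Hall gives an injective colouring.
\end{itemize}
Your own pigeonhole remark would also close the gap if applied correctly. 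A Hall violator has $|X|>q$, so it contains both ends $u,v$ of some matching edge. Since $|V(B)|\le 2q$, we get $|L'(u)\cap L'(v)|\ge 2q-|U|>2q-|X|\ge 0$. But then you should remove only $u$, $v$ and one common colour, not all of $X$ and $U$.
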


\begin{proof} Let $s=|S|$ and $q=\alpha(H-S)$. Color the vertices of $S$ greedily with $s$ distinct colors from their lists. Delete all the colors used on $S$ from every list for $V(A)\setminus S$; note these lists still each have size at least $q$ and we aim to color $A-S$ from them.

Since $H-S$ is bipartite, K\H{o}nig's theorem~\cite{Konig} gives $\nu(H-S)=|V(H-S)|-\alpha(H-S)$. Hence, if we take a maximum matching of $H-S$, then the number of matching edges plus the number of unsaturated vertices is exactly $\alpha(H-S)=q$. Let $Q$ be the complete $q$-partite graph whose parts correspond to these pairs of matched vertices (which are non-adjacent in $A$) and these unsaturated vertices. Then $A-S\subseteq Q$.

We prove that $Q$ can be colored from the remaining lists $L'$ by induction on $q$. The result is clear for $q=1$, so assume $q\ge2$. If a two-vertex part $\{x,y\}$ satisfies $L'(x)\cap L'(y)\ne\emptyset$, choose $c\in L'(x)\cap L'(y)$, color $x$ and $y$ with $c$, delete this part, and delete $c$ from every remaining list. The resulting graph has $q-1$ parts and every remaining list has size at least $q-1$, so our result follows by induction. We may therefore assume that the two lists in every two-vertex part are disjoint. For $X\subseteq V(Q)$, let $U(X)=\bigcup_{x\in X}L'(x)$. We show that $|U(X)|\geq |X|$, from which the result follows by Theorem \ref{thm:hall}. If $0<|X|\le q$, then $|U(X)|\ge q\ge|X|$; so we may assume that $|X|>q$. Each color in $U(X)$ appears on at most one vertex from each part and hence on at most $q$ vertices of $X$. Therefore
$
q|X|\le\sum_{x\in X}|L'(x)|\le q|U(X)|,
$
and $|U(X)|\ge|X|$, as desired.
\end{proof}

Note that the final paragraph of the above proof can be replaced by an application of Theorem \ref{thm:Ohba}:  $Q$ is a $q$-chromatic graph with at most $2q$ vertices, so Theorem \ref{thm:Ohba} implies that it is $q$-list-colorable. The inclusion of the above simple argument shows that Lemma \ref{lem:listmatch} is not difficult. We will use a generalized version of this argument to prove the following list-coloring result, which we have restated for convenience.

\setcounter{theorem}{7}
\begin{theorem} Let $\ell\geq 3$ be an integer and let $G$ be a graph with $|V(G)|=2\ell-1$ whose complement is hypomatchable. Let $L$ be an $(\ell-1)$-assignment for $G$ where all lists are subsets of a common $\ell$-set. Then $G$ is $L$-colorable unless all lists in $L$ are identical.
\end{theorem}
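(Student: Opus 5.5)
The plan is to reduce the problem to a single bipartite matching between ``parts'' and colors, exactly as in the last paragraph of the proof of Lemma \ref{lem:listmatch}. Let $C$ be the common $\ell$-set and $H=\overline{G}$. Since every list has size $\ell-1$ and lies in $C$, each vertex $u$ misses exactly one color $m(u)\in C$, so $L(u)=C\setminus\{m(u)\}$. The lists are not all identical, so $m$ is not constant.

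For any vertex $v$, hypomatchability gives a perfect matching $M_v$ of $H-v$ with $\ell-1$ edges. Each edge of $M_v$ is a pair of vertices that are non-adjacent in $G$. Together with $\{v\}$ this gives a partition of $V(G)$ into $\ell$ parts, each independent in $G$. It then suffices to give the $\ell$ parts distinct colors of $C$ such that each part receives a color lying in the lists of all its vertices. A pair $\{x,y\}$ forbids only the colors $\{m(x),m(y)\}$, and $\{v\}$ forbids only $m(v)$. So we need a perfect matching in the bipartite graph between parts and colors in which each part is joined to the colors it does not forbid. We check this with Theorem \ref{thm:hall}.

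Every pair part allows at least $\ell-2$ colors and $\{v\}$ allows $\ell-1$. Hence a Hall violator $X$ must have $|X|\ge \ell-1$. There are two cases.
\begin{itemize}
\item If $|X|=\ell-1$, then $X$ must consist of all $\ell-1$ pair parts, each forbidding the same pair $\{a,b\}$ with $a\neq b$. So in $V(G)\setminus\{v\}$ exactly $\ell-1$ vertices miss $a$ and exactly $\ell-1$ miss $b$.
\item If $|X|=\ell$, the allowed colors have union of size at most $\ell-1$. Then some color $c$ is forbidden by every part, so $m(v)=c$ and each pair contains a vertex missing $c$. Hence at least $\ell$ vertices miss $c$, including $v$.
\end{itemize}
Crucially, both failure conditions depend only on the multiset of missing colors and on the choice of $v$, not on the particular matching $M_v$.

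It remains to choose $v$ so that neither condition occurs. This counting step is the main thing to get right. Let $n_c$ be the number of vertices $u$ with $m(u)=c$, so $\sum_c n_c=2\ell-1$.
\begin{itemize}
\item If some color $c$ has $n_c\ge \ell$, this $c$ is unique. Choose $v$ with $m(v)\ne c$, which is possible since $m$ is not constant. The second failure needs a color with at least $\ell$ misses including $v$, so it is excluded. The first failure needs two colors with $\ell-1$ misses each in $V-v$, which together with $n_c\ge\ell$ exceeds $2\ell-2$ unless $c\in\{a,b\}$. But $c\in\{a,b\}$ would force $n_c=\ell-1$ in $V-v$, and hence $n_c=\ell-1$ overall since $m(v)\neq c$, contradicting $n_c\ge\ell$.
\item Otherwise every $n_c\le \ell-1$, so the second failure is impossible. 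If there are two colors $a,b$ with $n_a=n_b=\ell-1$, the remaining vertex misses some third color $d$. Choose $v$ with $m(v)=a$. In $V-v$ the color $a$ now has $\ell-2$ misses, and no two colors have $\ell-1$ misses each, since only $b$ does. So the first failure is excluded.
\item If no two colors have $\ell-1$ misses each, any $v$ works.
\end{itemize}

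In every case Hall's condition holds, the parts receive distinct permissible colors, and coloring each vertex with its part's color gives an $L$-coloring of $G$.
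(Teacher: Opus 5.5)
Your proof is correct and follows essentially the paper's route: the same partition of $V(G)$ into $\{v\}$ and the edges of a perfect matching of $\overline{G}-v$, Hall's theorem applied to parts versus colors, and the same two kinds of Hall violators (the paper's type 1 and type 2 vertices). The only difference is cosmetic: you finish by choosing a good $v$ directly from the counts $n_c$, whereas the paper assumes every $v$ fails and derives a contradiction from how the two types interact.
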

\setcounter{theorem}{12}

\begin{proof} Let $P$ be an $\ell$-set from which the $(\ell-1)$-subsets of the list assignment are chosen. For each $v\in V(G)$, let $f(v)$ be the unique color in $P\setminus L(v)$, and suppose that $f$ is not constant. 

Since $\overline{G}$ is hypomatchable, for each $u\in V(G)$ we can choose a perfect matching $M_u$ of $\overline{G}-u$. Moreover, we can partition $V(G)$ into the singleton $u$ and the $\ell-1$ edges of $M_u$ (which are non-edges in $G$). Let $Q_u$ be the complete $\ell$-partite graph induced by these parts. Then $G\subseteq Q_u$; we will show that $Q_u$ is $L$-colorable for some choice of $u$. In fact, we will show that for some choice of $u$, $Q_u$ has an $L$-coloring where each pair of vertices in a part of size 2 gets the same color. To this end, let $L_u(u)=L(u)$ and for any pair of vertices $v,w$ in a part of $Q_u$, let $L_u(\{v, w\})=L(v)\cap L(w)$. Note that in any $Q_u$, the singleton part has a list of size $\ell-1$ and the other parts have lists of size at least $\ell-2$.

Suppose for a contradiction that no $Q_u$ admits such a coloring. Then, given some $u\in V(G)$, Theorem \ref{thm:hall} tells us that there exists a collection $X_u$ of parts of $Q_u$ such that $|X_u|> |\cup_{x\in X_u} L_u(x)|$. Since all lists $L_u$ have size at least $\ell-2$, and since $|X_u|\leq \ell$, we know that $|X_u|\in \{\ell-1, \ell\}$. If $|X_u|=\ell-1$ then we say that $u$ is a \emph{type 1} vertex, and if $|X_u|=\ell$ we say that $u$ is a \emph{type 2} vertex.

If $u$ is a type 1 vertex, then the strict inequality given by Theorem \ref{thm:hall} tells us that\\ ${|\cup_{x\in X_u}L_u(x)|=\ell-2}$ and hence $X_u$ contains every part of $Q_u$ except for the singleton $\{u\}$, and moreover all these lists $L_u$ consist of the same set of $\ell-2$ colors. That is, there exist colors $\alpha_u, \beta_u\in P$ such that, for every edge $vw\in M_u$, $\{f(v), f(w)\}=\{\alpha_u, \beta_u\}$. In particular, this means that exactly $\ell-1$ vertices of $G-u$ have $f$-value equal to each of $\alpha_u, \beta_u$.

If $u$ is a type 2 vertex, then $X_u$ consists of every part of $Q_u$, and the strict inequality given by Theorem \ref{thm:hall} tells us that $|\cup_{x\in X_u} L_u(x)|\leq \ell-1$. So there exists a color $\gamma_u\in P$ such that $f(u)=\gamma_u$ and for every edge $vw\in M_u$,  $\gamma_u\in\{f(v), f(w)\}$. In particular, this means that at least $(\ell-1)+1=\ell$ vertices of $G$ have $f$-value equal to $\gamma_u$.

If two vertices $u, v$ in $G$ are both of type 2, then since $\ell>|V(G)|/2$, we must have that $f(u)=\gamma_u=\gamma_v=f(v)$. In particular, this means that not every vertex in $G$ can be of type 2, since we have assumed that $f$ is not constant. So we may assume that some vertex $z\in V(G)$ is of type 1. 

Consider the colors $\alpha_z, \beta_z, f(z)$. Among the $2\ell-2$ vertices of $G-z$, we know that exactly $\ell-1$ have $f$-value $\alpha_z$ and exactly $\ell-1$ have $f$-value $\beta_z$. Since $\ell\geq 3$, this means that at least two vertices in $G$ have $f$-value equal to each of $\alpha_z, \beta_z$. 

If $f(z)\not\in \{\alpha_z, \beta_z\}$, then no single color is missing from  $\ell$ of the lists $L$, so no vertex in $G$ can be of type 2. But then choose a vertex $a$ with $f(a)=\alpha_z$. Since $a$ must be of type 1, we know $f(v)$ equals only two different colors among all $V(G)\setminus \{a\}$, contradicting the three different $f$-values $f(z), \alpha_z, \beta_z$ which we know to be present.  Hence it must be the case that $f(z)\in \{\alpha_z, \beta_z\}$; suppose without loss that $f(z)=\alpha_z$.

We now have that among the $2\ell-1$ vertices of $G$, exactly $\ell$ have $f$-value $\alpha_z$ and exactly $\ell-1$ have $f$-value $\beta_z$. Choose $b$ with $f(b)=\beta_z$. The vertex $b$ cannot be of type 2, since not enough vertices have $f$-value $\beta_z$. On the other hand, the vertex $b$ cannot be of type 1 either, since that would mean exactly $\ell-1$ vertices of $G-b$ have $f$-value $\alpha_z$ and exactly $\ell-1$ vertices of $G-b$ have $f$-value $\beta_z$, which is not the case. Contradiction. 
\end{proof}

\vspace*{.1in}

\section{Restricting Odd Antiholes}

We begin by proving the following technical lemma.

\begin{lemma}\label{lem:xy} Let $G$ be a $k$-critical graph and suppose $G$ contains an odd antihole $A=\overline{C_{2t+1}}$. Let $\emptyset\ne X\subsetneq V(A)$. Then either there exists 
$x\in X$ satisfying
$$d_G(x)\geq k+|X|-3-\alpha(C_{2t+1}[X]),$$
or there exists $y\in V(A)\setminus X$ satisfying
$$d_G(y)\geq k+|X|-3.$$
\end{lemma}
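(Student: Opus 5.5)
We argue by contradiction, using criticality together with Lemma \ref{lem:listmatch}. Suppose every $x\in X$ has $d_G(x)\le k+|X|-4-\alpha(C_{2t+1}[X])$ and every $y\in V(A)\setminus X$ has $d_G(y)\le k+|X|-4$. Since $G$ is $k$-critical and $A$ is a nonempty subgraph with $A\neq G$ or $A=G$, the graph $G-V(A)$ has a $(k-1)$-coloring $\varphi$. (If $A=G$ we take the empty coloring; the argument below is unchanged.) We will extend $\varphi$ to all of $G$ with colors from $\{1,\dots,k-1\}$, which contradicts $\chi(G)=k$.

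For $v\in V(A)$ let $L(v)$ be the set of colors in $\{1,\dots,k-1\}$ not used by $\varphi$ on neighbours of $v$ outside $A$. Each $v\in V(A)$ has exactly $2t-2$ neighbours inside $A$, so it has at most $d_G(v)-(2t-2)$ neighbours outside. Hence
$$|L(v)|\ge k-1-d_G(v)+2t-2.$$
For $x\in X$ this gives $|L(x)|\ge 2t+1-|X|+\alpha(C_{2t+1}[X])$, and for $y\notin X$ it gives $|L(y)|\ge 2t+1-|X|$.

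Next we apply Lemma \ref{lem:listmatch} with $H=C_{2t+1}$, so that $A=\overline{H}$, and with $S=V(A)\setminus X$. Then $S$ is nonempty, and $S\neq V(A)$ because $X\neq\emptyset$. Moreover $H-S=C_{2t+1}[X]$ is a proper induced subgraph of an odd cycle, so it is a union of paths and therefore bipartite. The size requirements of the lemma are $|L(v)|\ge |S|=2t+1-|X|$ for $v\in S$, and $|L(v)|\ge |S|+\alpha(H-S)=2t+1-|X|+\alpha(C_{2t+1}[X])$ for $v\in X$. These are exactly the bounds obtained above.

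Lemma \ref{lem:listmatch} therefore gives an $L$-coloring of $A$. By the definition of $L$, this coloring is compatible with $\varphi$ on every edge between $A$ and $G-V(A)$. So $G$ is $(k-1)$-colorable, a contradiction.

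The main point requiring care is the degree bookkeeping: checking that the constant $-3$ in the statement matches $2t-2$ internal neighbours together with $k-1$ colors. In the statement's strict form, the contradiction hypothesis $d\le k+|X|-4$ yields list sizes one larger than the lemma requires, so the slack is harmless.
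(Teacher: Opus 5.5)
Your proposal is correct and matches the paper's proof essentially step for step: the residual lists come from a $(k-1)$-coloring of $G-V(A)$, and Lemma~\ref{lem:listmatch} is applied with $H=C_{2t+1}$ and $S=V(A)\setminus X$. One small correction: your closing remark about slack is wrong, since the negated hypothesis $d_G(v)\le k+|X|-4$ gives list sizes exactly equal to what Lemma~\ref{lem:listmatch} requires, not one larger. Your third paragraph already computes this correctly, so the argument itself is unaffected.
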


\begin{proof} Suppose, for a contradiction, that no such $x$ or $y$ exists. Since $G$ is $k$-critical, $G-A$ has a $(k-1)$-coloring $\varphi$. For every vertex $v$ of the antihole, define
$$L(v)=\{1,\ldots,k-1\}\setminus\varphi(N_G(v)-V(A)).$$
Every vertex $v$ of $A$ has $2t-2$ neighbors in $A$, and hence
$$|L(v)|\ge(k-1)-\bigl(d_G(v)-(2t-2)\bigr)=k+2t-3-d_G(v).$$
Since $x,y$ do not exist, this means that when $v\in X$,
$$|L(v)|\ge 2t+1-|X|+\alpha(C_{2t+1}[X]),$$
and when $v\in V(A)\setminus X$,
$$|L(v)| \geq 2t+1-|X|.$$
Let $S=V(A)\setminus X$. Since $S\ne\emptyset$, the graph $C_{2t+1}-S=C_{2t+1}[X]$ is a forest and hence bipartite. We may now apply Lemma \ref{lem:listmatch} with this set $S$, where $|S|=2t+1-|X|$. This tells us that $A$ is $L$-colorable, which extends $\varphi$ to a $(k-1)$-coloring of $G$, a contradiction. 
\end{proof}

We will prove Theorem \ref{thm:degrees} using the above lemma and the following classic result of Gallai. Here, a \emph{block} is a maximal connected subgraph with no cut vertex.

\begin{theorem}[Gallai~\cite{Gallai}]\label{thm:gallai}
Let $G$ be a $k$-critical graph and let
$$D=\{v\in V(G):d_G(v)=k-1\}.$$
Then every block of $G[D]$ is complete or an odd cycle.
\end{theorem}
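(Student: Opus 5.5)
The plan is to derive Gallai's theorem from \emph{degree-choosability} (in the spirit of Borodin and of Erd\H{o}s, Rubin and Taylor). We use criticality to turn a bad block into a list-coloring instance and then show that instance is always solvable.

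\textbf{Reduction to list coloring.} Suppose for a contradiction that some block $B$ of $G[D]$ is neither complete nor an odd cycle. Then $B$ has at least four vertices, so $B$ is $2$-connected and $\emptyset\neq V(B)\subsetneq V(G)$. Since $G$ is $k$-critical, $G-V(B)$ has a $(k-1)$-coloring $\varphi$. For $v\in V(B)$ define
$$L(v)=\{1,\dots,k-1\}\setminus\varphi\bigl(N_G(v)\setminus V(B)\bigr).$$
Because $d_G(v)=k-1$, we get $|L(v)|\ge (k-1)-(d_G(v)-d_{G[V(B)]}(v))=d_{G[V(B)]}(v)\ge d_B(v)$. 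Here $B$ is a block of $G[D]$, so it is an induced subgraph of $G[D]$, and $D$-vertices adjacent to $v$ but outside $B$ lie in $G-V(B)$. An $L$-coloring of $G[V(B)]$ (or even of $B$, after noting $G[V(B)]=B$) would extend $\varphi$ to a $(k-1)$-coloring of $G$, which is a contradiction. So it suffices to prove the following: \emph{a $2$-connected graph $B$ that is neither complete nor an odd cycle is $L$-colorable whenever $|L(v)|\ge d_B(v)$ for all $v$.}

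\textbf{Proof of the list statement.} First suppose some vertex $w$ has $|L(w)|>d_B(w)$. Order the vertices of the connected graph $B$ by decreasing distance from $w$ (reverse BFS order) and color greedily. Every vertex other than $w$ has an uncolored neighbor when it is colored, so a color remains available for it. Vertex $w$ is colored last and also has a color left, since its list exceeds its degree.

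So we may assume $|L(v)|=d_B(v)$ for all $v$. Next suppose some edge $uv$ has $L(u)\neq L(v)$. Since $|L(u)|=d_B(u)\ge 1$ and the lists differ, we can choose a color $c\in L(u)$ that either lies outside $L(v)$ or works because $|L(v)|>|L(u)|$. More precisely, if $L(u)\not\subseteq L(v)$, color $u$ with some $c\in L(u)\setminus L(v)$; otherwise swap the roles of $u$ and $v$. Delete $u$ and remove $c$ from the lists of its neighbors. In $B-u$, which is connected by $2$-connectivity, every vertex still has a list of size at least its degree, and $v$ has strictly more. The previous paragraph then colors $B-u$.

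\textbf{Remaining case and main obstacle.} In the last case, adjacent vertices have equal lists, so by connectivity all lists equal a common set of size $r$. This forces $B$ to be $r$-regular, and the $L$-coloring problem becomes ordinary $r$-coloring. Brooks' Theorem applies because $B$ is connected and neither complete nor an odd cycle, so $B$ has an $r$-coloring, which completes the contradiction.

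The delicate step is the second one. There I need to check carefully that the chosen color $c$ can be taken to avoid $L(v)$ (or that $v$ otherwise gains surplus), and that $2$-connectivity keeps $B-u$ connected so the greedy argument applies. The appeal to Brooks' Theorem cleanly absorbs the hardest, regular case.
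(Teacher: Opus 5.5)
The paper gives no proof of this statement; it cites Gallai's theorem as a black box. Your argument is correct and self-contained, and it takes a different route: you reduce to degree-choosability of the offending block, then prove that via three steps.
\begin{itemize}
\item Reverse-BFS greedy coloring handles the case where some list has surplus.
\item If some edge has unequal lists, a first move colors one endpoint so that the other gains surplus; $2$-connectivity keeps $B-u$ connected, so the first step applies.
\item If all lists are identical, the block is $r$-regular and Brooks' Theorem finishes.
\end{itemize}
All three steps check out. The observation that $G[V(B)]=B$ (a block of an induced subgraph is induced) is exactly what gives $|L(v)|\ge d_B(v)$.

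There are two cosmetic slips, neither affecting validity:
\begin{itemize}
\item $V(B)\subsetneq V(G)$ does not follow from $|V(B)|\ge 4$, but you never need it. If $V(B)=V(G)$, then $G-V(B)$ is empty and the argument runs unchanged.
\item The phrase about $c$ ``working because $|L(v)|>|L(u)|$'' is muddled. Your ``more precisely'' version is the right one: pick $c$ from whichever of $L(u)\setminus L(v)$ and $L(v)\setminus L(u)$ is nonempty.
\end{itemize}

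Compared with simply citing Gallai, your route proves a stronger list statement: every $2$-connected graph other than a clique or an odd cycle is degree-choosable. Its reduction step is the same residual-list device the paper uses in Lemma~\ref{lem:xy} and in the proof of Theorem~\ref{thm:2k-5}, so it fits the paper's methods naturally. The price is that Brooks' Theorem is an input. Unlike the general degree-choosability theorem of Borodin and of Erd\H{o}s, Rubin and Taylor, your argument therefore cannot be turned around to recover Brooks.
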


\setcounter{theorem}{5}
\begin{theorem} Let $G$ be a $k$-critical graph with maximum degree $\Delta$, and suppose $G$ contains an odd antihole $\overline{C_{2t+1}}$ with $t\geq 3$ (i.e., not $\overline{C_5}$). Then the number of vertices in the antihole with degree at least $k$ in $G$ is at least $t+1$ if $t\geq 4$ and at least $t$ if $t=3$. Moreover, for $1\leq j\leq t-3$,  the number of vertices in the antihole with degree at least $k+j$ in $G$ is at least  $\min\{t-j-1, 2t+k-\Delta-2\}$.
\end{theorem}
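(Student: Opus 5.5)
The second statement follows directly from Lemma \ref{lem:xy}; the first needs Gallai's Theorem \ref{thm:gallai} in addition. Throughout, $A=\overline{C_{2t+1}}$ is the antihole and $C=C_{2t+1}$ is its complement on $V(A)$.

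\textbf{The bound for degree at least $k+j$.} Fix $1\le j\le t-3$ and let $B$ be the set of vertices of $A$ with $d_G\ge k+j$. Suppose $b=|B|<\min\{t-j-1,\,2t+k-\Delta-2\}$. Apply Lemma \ref{lem:xy} with $X=V(A)\setminus B$, so that $|X|=2t+1-b$.
\begin{itemize}
\item $X$ is a proper subset of $V(A)$ provided $b\ge1$. If $b=0$, I would instead take $X$ to be $V(A)$ minus one vertex; the argument below goes through with $b$ replaced by $1$, since $1\le t-j-2$ in that range.
\item Every $y\notin X$ has $d_G(y)\le\Delta<k+|X|-3$, because $b<2t+k-\Delta-2$.
\item Every $x\in X$ has $d_G(x)\le k+j-1$. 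To contradict the lemma I need $\alpha(C[X])<|X|-j-2$.
\end{itemize}
The graph $C[X]$ is a disjoint union of at most $b$ paths on $|X|$ vertices in total. A path on $n$ vertices has independence number $\lceil n/2\rceil$, so $\alpha(C[X])\le(|X|+b)/2$. It therefore suffices that $(2t+1-b+b)/2<2t+1-b-j-2$, which rearranges to $b<t-j-\tfrac32$. This holds because $b\le t-j-2$. Lemma \ref{lem:xy} is then contradicted, which proves the second statement. The only care needed is checking the parity and rounding in this integer arithmetic.

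\textbf{The bound for degree at least $k$.} Let $D$ be the set of vertices of $A$ with $d_G<k$. Since $\delta(G)\ge k-1$, these vertices have degree exactly $k-1$, so $D$ lies in the set of Theorem \ref{thm:gallai}. Hence every block of $G[D]$ is complete or an odd cycle. This property passes to induced subgraphs: a $2$-connected induced subgraph lies inside a single block, and the $2$-connected induced subgraphs of a clique or an odd cycle are again complete or the whole cycle. So $A[D]$, and every induced subgraph of it, has only complete or odd-cycle blocks.

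Suppose fewer than the claimed number of vertices of $A$ have degree at least $k$. Then $|D|\ge t+1$ when $t\ge4$, and $|D|\ge t+2$ when $t=3$; in both cases $|D|\ge5$. Moreover $|D|\le 2t$, because $A$ itself is not a Gallai forest, being $2$-connected and neither complete nor a cycle for $t\ge3$. So $C[D]$ is a linear forest on $m$ vertices with $5\le m\le 2t$. I will find a forbidden $2$-connected induced subgraph in $A[D]=\overline{C[D]}$.
\begin{itemize}
\item If $C[D]$ contains two cycle edges $ab$ and $cd$ with no further edges of $C$ among $\{a,b,c,d\}$, then $A[\{a,b,c,d\}]$ is an induced $C_4$. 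This is an even-cycle block, which is forbidden.
\item If no such pair of edges exists, $C[D]$ has very few edges, since $m\ge5$. Then $D$ contains a large independent set of $C$, i.e.\ a large clique of $A$. Adding a suitable further vertex adjacent to all but one or two of its members gives a diamond $K_4-e$, which is forbidden because it is $2$-connected and neither complete nor a cycle.
\end{itemize}

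The threshold is exactly right for $t=3$: if $m=4$, then $C[D]$ can be $P_4$, and $\overline{P_4}=P_4$ is a Gallai tree, so no contradiction arises. This is why the bound for $t=3$ is only $t$.

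I expect the main obstacle to be making this last case analysis clean: showing that the complement of every linear forest on $m$ vertices with $5\le m\le 2t$, sitting inside $C_{2t+1}$, contains an induced $C_4$ or diamond. The cases with many short path components need separate checking.
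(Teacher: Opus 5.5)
\textbf{Second part (degree at least $k+j$).} Your argument is essentially the paper's: both apply Lemma \ref{lem:xy} to $X=V(A)\setminus B$ and use $\alpha(C[X])\le t$. There is one slip. When $b=0$, replacing $b$ by $1$ in your second bullet requires $2t+k-\Delta-2\ge 2$, and your hypothesis does not guarantee this. The fix is immediate: the deleted vertex $u$ lies outside $B$, so $d_G(u)\le k+j-1<k+2t-3$ directly. This is how the paper handles that case.

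\textbf{First part: the genuine gap.} After deriving $|D|\ge t+1$ (resp.\ $t+2$), you only ever use $5\le |D|\le 2t$. The claim you set out to prove is false as stated. That claim is that the complement of every linear forest on $m$ vertices, $5\le m\le 2t$, sitting in $C_{2t+1}$, contains an induced $C_4$ or diamond. If $D$ is an independent set of $C$ (possible for $5\le m\le t$ once $t\ge 5$), then $A[D]$ is a clique. Gallai's theorem permits this, and your second bullet then has no ``further vertex'' to add.

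The missing idea is that $|D|\ge t+1>\alpha(C_{2t+1})=t$ forces two consecutive vertices of $C$ into $D$, i.e.\ $C[D]$ has an edge. This, not $m\ge 5$, is what produces the threshold $t+1$ for $t\ge 4$: an independent $t$-set of $C$ gives no contradiction, just as your $P_4$ example explains the threshold for $t=3$. The paper uses exactly this consecutive pair to rule out $A[D]$ being a single complete block.

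\textbf{With that fact added, your route closes, and it differs from the paper's.} The paper argues through the block structure of $A[D]$:
\begin{itemize}
\item a single block must be an odd cycle, which $A$ does not contain induced with length at least five;
\item several components put a complete bipartite graph inside $C[D]$, forcing $|D|\le 4$;
\item a cut vertex forces a $C_4$ inside $C_{2t+1}$.
\end{itemize}
You instead exhibit a forbidden induced subgraph directly. If $C[D]$ has an edge but no induced $2K_2$, it has exactly one nontrivial component, which is $P_2$, $P_3$ or $P_4$. Since $m\ge 5$, this leaves at least $3$, $2$ or $1$ isolated vertices respectively. For a component $ab$ or $abc$, the set $\{a,b\}$ together with two isolated vertices induces a diamond in $A$. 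For $abcd$, the set $\{a,c,d\}$ together with one isolated vertex does. This is a short, self-contained replacement for the paper's case analysis, and it avoids the separate check that $A$ has no long induced odd cycle.
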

\setcounter{theorem}{14}

\begin{proof}  Let $A=\overline{C_{2t+1}}$ and consider some fixed $j\in\{1, \ldots, t-3\}$. Let 
$$D_j=\{v\in V(A):d_G(v)\ge k+j\}.$$
Apply Lemma~\ref{lem:xy} with $X_u=V(A)\setminus\{u\}$ for any $u\in V(A)$. Since $\alpha(C_{2t+1}[X_u])=t$, the first alternative of the lemma would give a vertex $x\in X_u$ with
$d_G(x)\ge k+|X_u|-3-t=k+t-3\geq k+j$; the second alternative would give 
$d_G(u)\geq k+|X_u|-3\ge k+ 2t-3$.  In particular, this means that $D_j\neq\emptyset$.

Suppose that $|D_j|< \min\{t-j-1, 2t+k-\Delta-2\}$. Let $X=V(A)\setminus D_j$, so $|X|=2t+1-|D_j|$ and $|X|\geq t+j+3$, $|X|\geq \Delta-k+4$.
If we apply Lemma~\ref{lem:xy} to this $X$, the second alternative of the lemma would give a vertex $y\in D_j$ satisfying 
$$d_G(y)\geq k+|X|-3\geq k+(\Delta-k+4)-3=\Delta+1$$
which is impossible. So in fact there must exist a vertex $x\in X$ with
$$d_G(x)\ge k+|X|-3-\alpha(C_{2t+1}[X])\geq k+|X|-3-t\geq  k+(t+j+3)-3-t\geq k+j.$$
Since $x\not\in D_j$, this is a contradiction.

It remains now to prove the first assertion of the theorem. We know that the minimum degree of $G$ is at least $k-1$; we will show that there are not too many vertices of $A$ achieving this lower bound. To this end, let
$D=\{v\in V(A):d_G(v)=k-1\}$ and let $D_G=\{v\in V(G):d_G(v)=k-1\}$. By Theorem~\ref{thm:gallai}, every block of $G[D_G]$ is complete or an odd cycle. If $B$ is a block of $G[D]$, then it is a subgraph of some block $B_G$ of $G[D_G]$. If $B_G$ is a clique, then so is $B$. If $B_G$ is an odd cycle, then the only way that $B$ is still a block is if $B$ is just $K_1$, $K_2$ or $B=B_G$. Hence every block of $G[D]=A[D]$ is a clique or odd cycle.

We aim to show that $|D|\leq t$ if $t\geq 4$ and $|D|\leq t+1$ if $t=3$, so suppose not, i.e., suppose $|D|\geq t+1$ if $t\geq 4$ and $|D|\geq t+2$ if $t=3$. In particular, this means that $|D|\geq 5$. It also means that $D$ must contain a pair of vertices that are consecutive on $C_{2t+1}$, and hence non-adjacent in $A[D]$. 

If $A[D]$ consists of only one block, then since it has a pair of non-adjacent vertices, $A[D]$ must be an odd cycle. But $\overline{C_{2t+1}}$ has no induced odd cycle of length at least five: if $A[Y]=C_r$ with $r\geq 5$, then $C_{2t+1}[Y]$ is $(r-3)$-regular, while every induced subgraph of $C_{2t+1}$ has maximum degree at most 2; hence $r=5$ which would force $C_{2t+1}[Y]=C_5$, impossible for a proper induced subgraph of a cycle. Thus $A[D]$ is a triangle, contradicting $|D|\geq 5$.

If $A[D]$ has multiple components, then $\overline{A[D]}$ must contain a complete bipartite graph (between one component of $A[D]$ and the rest of them). Since $\overline{A[D]}\subseteq C_{2t+1}$ it has maximum degree at most 2, and hence the complete bipartite graph cannot be larger than $K_{2,2}$. But this means that $|D|\leq 2+2$, a contradiction.

We now know that $A[D]$ is connected but has a cut vertex, say $z$, separating some partition $U, W$ of $A[D]\setminus z$. Then  $\overline{A[D]\setminus z}$ must contain a complete bipartite graph between $U$ and $W$, and applying the argument in the above paragraph tells us that $|U|=|W|=2$. But then $\overline{A[D]\setminus z}$ contains $K_{2, 2}=C_4$, which is not possible since $\overline{A[D]}\subseteq C_{2t+1}$ and hence cannot contain a $C_4$.
\end{proof}

We can now prove Theorems \ref{thm:length} and \ref{thm:2k-5}, which we restate here, the latter of which in an expanded form.

\setcounter{theorem}{1}
\begin{theorem} Let $G$ be a $k$-critical graph with maximum degree $\Delta$. Then the length of any odd antihole in $G$ is at most $2(\Delta-k)+7$. 
\end{theorem}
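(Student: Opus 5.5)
We want: if $A=\overline{C_{2t+1}}$ is an odd antihole in a $k$-critical graph $G$ with maximum degree $\Delta$, then $2t+1\le 2(\Delta-k)+7$, i.e.\ $t\le \Delta-k+3$. The plan is to apply Lemma~\ref{lem:xy} with a single cleverly chosen set $X$, so that both alternatives force a vertex of degree exceeding $\Delta$ unless $t$ is small.

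\textbf{Choice of $X$.} Take $X=V(A)\setminus\{u\}$ for any vertex $u$ of $A$, exactly as in the first step of the proof of Theorem~\ref{thm:degrees}. Then $|X|=2t$. The graph $C_{2t+1}[X]$ is a path on $2t$ vertices, so $\alpha(C_{2t+1}[X])=t$. Lemma~\ref{lem:xy} now gives one of two outcomes:
\begin{itemize}
\item[(i)] a vertex $x\in X$ with $d_G(x)\ge k+2t-3-t=k+t-3$;
\item[(ii)] the vertex $u$ itself satisfies $d_G(u)\ge k+2t-3$.
\end{itemize}
In either case some vertex of $G$ has degree at least $k+t-3$, since (ii) is even stronger. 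Hence $\Delta\ge k+t-3$, which gives $t\le \Delta-k+3$ and so $2t+1\le 2(\Delta-k)+7$, as desired.

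\textbf{Checks and the expected obstacle.} The only potential obstacle is verifying that Lemma~\ref{lem:xy} applies. It requires $\emptyset\ne X\subsetneq V(A)$, which holds since $t\ge 2$. The small case $A=\overline{C_5}=C_5$ needs no separate treatment: there the inequality reads $\Delta\ge k-1$, which holds trivially. I would also double-check that the computation of $\alpha$ for a path on $2t$ vertices gives exactly $t$, since that is what makes the bound tight. The extremal example $G=\overline{C_{2t+1}}$ has $k=t+1$ and $\Delta=2t-2=k+t-3$, which confirms that the computation is sharp.
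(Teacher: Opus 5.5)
Your proposal is correct and is essentially the paper's own proof. The paper likewise applies Lemma~\ref{lem:xy} with $X=V(A)\setminus\{u\}$, uses $\alpha(C_{2t+1}[X])=t$, and concludes $\Delta\ge k+t-3$ from either alternative.
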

\begin{proof} Let $A=\overline{C_{2t+1}}$ be an odd antihole in $G$. Apply Lemma \ref{lem:xy} with the set $X_u=V(A)\setminus \{u\}$ for any $u\in V(A)$, as we did at the start of the proof of Theorem \ref{thm:degrees}. Like there, we get either a vertex $x\in X_u$ with $d_G(x)\geq k+t-3$, or we get that $d_G(u)\geq k+2t-3$. Either way, $\Delta\geq k+t-3$, so $t\leq \Delta-k+3$ and $2t+1\leq 2(\Delta-k)+7$.
\end{proof}

\setcounter{theorem}{3}
\begin{theorem} Let $G$ be an odd-hole-free $k$-critical graph with maximum degree $\Delta$. If $\Delta\leq k$ or $\Delta \geq 2k-5$, and $G$ contains the odd antihole $\overline{C_{2(\Delta-k)+7}}$, then $G=\overline{C_{2(\Delta-k)+7}}$.
\end{theorem}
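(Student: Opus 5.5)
The plan is to dispose of the easy ranges first and then reduce the one remaining case to Theorem 8. The case $\Delta<k$ is Brooks' Theorem, as noted after Theorem 4. The case $\Delta=k$ is the cited result of Chen et al.\ and Galindo et al. So assume $\Delta\geq 2k-5$, let $A=\overline{C_{2(\Delta-k)+7}}$ be an odd antihole in $G$, and suppose $G\neq A$. Then $A$ is a proper subgraph of the $k$-critical graph $G$, so $k-1\geq\chi(A)=\Delta-k+4$, which gives $\Delta\leq 2k-5$. Hence we may assume $\Delta=2k-5$. Setting $\ell=k-1$, this means $A=\overline{C_{2\ell-1}}$ and $\chi(A)=k-1$. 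Every vertex of $A$ has degree $2\ell-4=2k-6$ inside $A$, so it has \emph{at most one} neighbour outside $A$.

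Next I would take a $(k-1)$-coloring $\varphi$ of $G-A$ and form lists as in the proof of Lemma 14: $L(v)=\{1,\dots,k-1\}\setminus\varphi(N_G(v)-V(A))$. Each list has size at least $\ell-1$ and lies inside the common $\ell$-set $\{1,\dots,k-1\}$. Shrinking every list to size exactly $\ell-1$, Theorem 8 applies, since $\overline{A}=C_{2\ell-1}$ is hypomatchable. It gives an $L$-coloring of $A$, and hence a $(k-1)$-coloring of $G$, unless the shrunken lists are all identical.

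We control the shrinking to avoid identical lists. If some vertex $v$ of $A$ has no outside neighbour, its list is the full $\ell$-set, and we delete from it a colour different from the one deleted elsewhere; if the other lists are not all equal anyway, any choice works. So the only obstruction is the following configuration: every vertex of $A$ has exactly one outside neighbour, and $\varphi$ gives all these outside neighbours the same colour $c$. Let $N$ be this set of outside neighbours; it is independent in $G$, since all of it receives colour $c$.

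The main obstacle is to show that this configuration can be broken using odd-hole-freeness. First I would record the local structure forced by $t\geq 3$, writing $A=\overline{C_{2t+1}}$ with vertices indexed around the complementary cycle. For consecutive vertices $0,1$ of the cycle, the path $0,2,2t,1$ is an induced path in $A$. So if an outside vertex $w$ is adjacent to both $0$ and $1$, then $w,0,2,2t,1$ is an induced $C_5$ unless $w$ is also adjacent to $2$ or $2t$. Iterating this around the cycle, any $w\in N$ adjacent to two consecutive vertices would be adjacent to all of $A$. Then $d(w)\geq 2k-3>\Delta$, which is impossible. So no outside vertex sees two consecutive vertices of the cycle.

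I would then try Kempe changes in $G-A$. For $w\in N$ and a colour $d\neq c$, swap $c$ and $d$ on the $(c,d)$-component of $w$. If this component misses some other vertex of $N$, the new colouring gives non-identical lists, and we are done. Otherwise, for every $d$ all of $N$ lies in one $(c,d)$-component. Take $u,v$ consecutive on the cycle with distinct outside neighbours $w\neq w'$, and a shortest $(c,d)$-path $P$ from $w$ to $w'$, which has even length. Closing $P$ through $A$ via the induced path $u,\dots,v$ of length $3$, or via a single edge, chosen to make the total length odd, should give an odd hole. I expect the real difficulty to be making this cycle induced, since interior vertices of $P$ may have neighbours in $A$. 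I would handle this by choosing $u,v$ and $P$ to minimise length, using the one-outside-neighbour property and the structure just described to forbid chords.
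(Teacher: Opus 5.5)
Everything up to the Kempe step matches the paper's proof: the reduction to $\Delta=2k-5$, the residual lists inside a common $(k-1)$-set, the appeal to Theorem~\ref{thm:list}, the sole obstruction (every vertex of $A$ has exactly one outside neighbour, all coloured $c$), and the switch forcing all of $N$ into one $(c,d)$-component. The gap is the final step. You leave it as an expectation rather than an argument, and the configuration you set up for it does not work.

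First, the ``iterating'' step does not follow. If $N_A(w)=\{0,1,2\}$, the pair $0,1$ is covered by $2$ and the pair $1,2$ by $0$. By Lemma~\ref{lem:nbhd}, $G[V(A)\cup\{w\}]$ then has no odd hole at all, since only components of order exactly two in $C_{2t+1}[N_A(w)]$ are excluded. So your local analysis does not show that consecutive cycle vertices have distinct outside neighbours.

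Second, if $u,v$ are consecutive on the cycle, they are non-adjacent in $A$, so no single-edge closing exists. The length-$3$ closing $u,u',v',v$ can then create chords. The vertices $u'$ and $v'$ have their own $c$-coloured outside neighbours, and nothing prevents these from lying on $P$; for example, the outside neighbour of $u'$ may be $w$ itself. ``Minimising'' over $u,v,P$ is not an argument that such chords disappear. You also never justify that $P$ itself is induced.

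The missing idea is to close through an edge of $A$. Choose $u,v$ \emph{adjacent in $A$} with distinct outside neighbours $w\neq w'$. Such a pair exists because $A$ is connected and $|N|\geq 2$: a single outside vertex adjacent to all $2k-3$ vertices of $A$ would have degree exceeding $\Delta=2k-5$.

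Let $P$ be a shortest $w$--$w'$ path in their common $(c,d)$-component. It has even length at least $2$, and it is induced in $G$. Indeed, any edge between two vertices of $P$ joins a $c$-vertex to a $d$-vertex, so it lies in that component, and minimality forces it to be an edge of $P$. Since $w$ and $w'$ are the \emph{only} outside neighbours of $u$ and $v$, the only edges between $\{u,v\}$ and $V(P)$ are $uw$ and $vw'$. So $P$ together with $w',v,u,w$ is an induced cycle of odd length at least $5$, which is an odd hole.

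This is exactly how the paper finishes. The one-outside-neighbour property you already derived does all the work once you close through a single edge of $A$, and no local analysis of how outside vertices attach to $A$ is needed.
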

\setcounter{theorem}{14}

\begin{proof} We already discussed the $\Delta\leq k$ case in the introduction, so we may assume that $\Delta\geq 2k-5$. Let $A=\overline{C_{2(\Delta-k)+7}}$ and set $t=\Delta-k+3$. Suppose for a contradiction that $G\neq A$. In particular, since $\chi(A)=t+1$ and $G$ is critical, this means that $\chi(G)=k\geq t+2$. Substituting for $t$, this tells us that
$$k \geq (\Delta-k+3)+2 \hspace*{.1in}\Rightarrow\hspace*{.1in} k\geq \tfrac{\Delta+5}{2}.$$
Since $\Delta\geq 2k-5$, we also have $k\leq \tfrac{\Delta+5}{2}$. Hence $k=\tfrac{\Delta+5}{2}=t+2$. 

Since $G$ is $k$-critical, let $\varphi$ be a $(t+1)$-coloring of $G-A$. For $v\in V(A)$, let
$$L(v)=\{1, 2, \ldots, t+1\}\setminus\varphi(N_G(v)-V(A)).$$
Note that $|L(v)|\geq t$ for all $v\in V(A)$, because $v$ has exactly $2t-2$ neighbors in $A$ and $\Delta=2t-1$. Since every vertex of $A$ has at most one neighbor outside $A$, each $L(v)$ has size $t$ or $t+1$. For every $v\in V(A)$, choose a $t$-subset $L'(v)\subseteq L(v)$, choosing them so that they are not all identical if possible. We can now apply Theorem \ref{thm:list} since $|V(A)|=2(t+1)-1$ and its complement (a cycle) is hypomatchable. If the lists are not all identical, then we get that $A$ is $L'$-colorable, and hence $L$-colorable, a contradiction. Thus it is impossible to choose $t$-subsets $L'(v)\subseteq L(v)$ that are not all identical. This forces every $L(v)$ itself to be the same $t$-set. This means that there is a unique color $c \in \{1, 2, \ldots, t+1\}$ such that: every $v\in V(A)$ is adjacent to exactly one vertex $z_v \in G\setminus A$, and $\varphi(z_v)=c$. Let $Z=\{z_v:v\in V(A)\}$.

Choose some $a\in \{1, 2, \ldots, t+1\}\setminus\{c\}$, and consider the components of $G-A$ induced by colors $a$ and $c$ under $\varphi$. If vertices of $Z$ occur in more than one such component, then let $\varphi'$ be the $(t+1)$-coloring of $G-A$ obtained from $\varphi$ by switching $a$ and $c$ on one of them. 
After switching $a$ and $c$ on the component, a vertex $v\in A$ whose unique neighbor $z_v$ lies in the switched component now has residual list $\{1, \ldots, t+1\}\setminus\{a\}$, while every other vertex has residual list $\{1, \ldots, t+1\}\setminus\{c\}$. Since the component contains some but not all vertices of $Z$, these $t$-lists are not all identical.
Then however we can again apply Theorem \ref{thm:list} to appropriate lists $L'$, which will now not be identical, and hence we can get a $(t+1)$-coloring of $G$, contradiction. So all vertices of $Z$ lie in the same $(a,c)$-component of $G-A$, say $B$.

We claim we can choose $x,y\in V(A)$ that are adjacent in $A$ (i.e., nonconsecutive on $C_{2t+1}$) and have $z_x\neq z_y$. If not, then indeed $|Z|=1$ (since $A$ is connected) which is impossible since that single vertex would have to have degree at least $2t+1=\Delta+2$. Given such $x,y$, let $P$ be a shortest $(z_x, z_y)$-path in $B$. Since $z_x\neq z_y$ are both colored $c$ under $\varphi$, $P$ has even length at least 2. If two vertices of $P$ are adjacent in $G$ then they receive different colors under $\varphi$, so one is colored $a$ and the other $c$, and the edge joining them lies in $B$; by the minimality of $P$ it is an edge of $P$. Hence $P$ is an induced path in $G$. Since $z_x$ is the unique neighbor of $x$ outside $A$, the only edges between $\{x,y\}$ and $V(P)$ are $xz_x$ and $yz_y$. So $P$ together with the path $(z_x, x, y, z_y)$ is an induced odd cycle of length at least five, that is, an odd hole in $G$, a contradiction.
\end{proof}

If we are free to use Conjecture \ref{conj:skcc} in place of Theorem \ref{thm:list} in the above proof, then we can get all the way to Conjecture \ref{conj:main}, as we now prove.

\begin{proof}\emph{(Proof of Theorem \ref{thm:conjs})} We follow the argument in the proof of Theorem \ref{thm:2k-5}. Let $A=\overline{C_{2\ell-1}}$ with $\ell=\Delta-k+4$, and suppose that $G\neq A$ and that $G$ contains no odd holes. As in the proof of Theorem \ref{thm:2k-5}, $k\geq \ell+1$. Set $q=k-\ell\geq 1$. Then 
\begin{equation}\label{eqn:tag1}
k=\ell+q \textrm{ and } \Delta=2\ell+q-4.
\end{equation}

Let $\phi$ be a $(k-1)$-coloring of $G-A$, with color set $P$, so $|P|=\ell-1+q$, and define the residual lists $L(v)=P\setminus \phi(N_G(v)\setminus V(A))$ for all $v\in V(A)$. Since each $v\in V(A)$ has $2\ell-4$ neighbors in $A$, (\ref{eqn:tag1}) implies that $|L(v)|\geq \ell-1$.

Choose an $(\ell-1)$-subset $L'(v)\subseteq L(v)$ for each $v\in V(A)$. If these can be chosen so that they are not all identical, then since we are assuming that $\overline{C_{2\ell-1}}$ is strongly $\ell$-chromatic-choosable, we get an $L'$-coloring of $A$, contradicting the $k$-criticality of $G$. Thus every possible choice of the $L'(v)$ is identical, which forces $L(v)=S$ for all $v\in V(A)$ for some fixed $(\ell-1)$-set $S\subseteq P$.

Let $F=P\setminus S$, so $|F|=q$. By $L(v)=S$, every vertex $v\in V(A)$ has an outside neighbor of every color in $F$. Since $v$ has at most $q$ neighbors outside $A$, it has exactly one such neighbor of each color in $F$, and no outside neighbor whose color lies in $S$. For $c\in F$, denote the unique $c$-colored neighbor of $v$ by $z_v(c)$.

Fix $c\in F$ and $a\in S$. The same Kempe-switch argument used in the proof of Theorem \ref{thm:2k-5} shows that all vertices $z_v(c)$ for all $v\in V(A)$, lie in the same $(a,c)$-component of $G\setminus A$: otherwise switching $a$ and $c$ on a component containing some but not all of them produces the two residual $(\ell-1)$-lists $S$ and $S\setminus\{a\}\cup\{c\}$, and Conjecture \ref{conj:skcc} again yields a $(k-1)$-coloring of $G$.

The same shortest-path argument from the proof of Theorem \ref{thm:2k-5} now shows that, for fixed $c\in F$, all the vertices $z_v(c)$ are in fact equal. Otherwise, since $A$ is connected, there are adjacent $x,y\in V(A)$ with $z_x(c)\neq z_y(c)$. 
Let $T$ be a shortest $(z_x(c),z_y(c))$-path in their common $(a,c)$-component. Exactly as in the proof of Theorem \ref{thm:2k-5}, $T$ is an induced path in $G$ of even length at least 2. Every vertex of $T$ is colored $a$ or $c$. Since $a\in S$, the vertex $x$ has no outside neighbor colored $a$, and $z_x(c)$ is its unique outside neighbor colored $c$; hence $z_x(c)$ is the only vertex of $T$ adjacent to $x$. Likewise $z_y(c)$ is the only vertex of $T$ adjacent to $y$. Hence $T$ together with the path $(z_x(c),x,y,z_y(c))$ is an odd hole in $G$, a contradiction. Thus for each $c\in F$ there is a vertex $z_c$ adjacent to every vertex of $A$.

Set $Z=\{z_c: c\in F\}$. The vertices of $Z$ are distinct, so $|Z|=q$ and $N_G(v)\setminus V(A)=Z$ for every $v\in V(A)$.

Every $z\in Z$ already has $(2\ell-1)$ neighbors in $A$, and hence, by (\ref{eqn:tag1}) we get that 
\begin{equation}\label{eqn:tag4}
d_{G\setminus A}(z)\leq \Delta-(2\ell-1)=q-3.
\end{equation} 
If $q\leq 2$, this is impossible. Hence $q\geq 3$.

Let $R=G-(A\cup Z)$, and retain the coloring $\phi$ on $R$. Choose any $(q-1)$-set $Q\subseteq P$. For all $z\in Z$, let $M(z)=Q\setminus \phi(N_R(z))$. By (\ref{eqn:tag4}), 
$$|M(z)|\geq q-1-d_R(z)\geq d_{G[Z]}(z)+2.$$
Thus $G[Z]$ can be greedily colored from the lists $M$. We have therefore recolored $G-A$ so that $Z$ uses at most $q-1$ colors. 

Finally, since $N_G(v)\setminus V(A)=Z$ for every $v\in V(A)$, every vertex of $A$ has precisely $Z$ as its outside neighborhood. Hence all vertices of $A$ have a common set of at least $(k-1)-(q-1)=\ell$ available colors. Since $\chi(A)=\ell$, the coloring extends to $A$, contradicting the criticality of $G$.
\end{proof}

\vspace*{.1in}

\section{Odd-hole-free attachments and sharpness}

We start with the following technical lemma.

\begin{lemma}\label{lem:nbhd} Let $A=\overline{C_{2t+1}}$, $t\geq 3$ be an induced subgraph in $G$, and let $C=C_{2t+1}$ be the cycle whose complement is $A$. For $x\in V(G)\setminus V(A)$, put $X=N_G(x)\cap V(A)$. Then $G[V(A)\cup\{x\}]$ is odd-hole-free if and only if no component of $C[X]$ has order two.
\end{lemma}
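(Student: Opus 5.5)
The plan is to analyze what an odd hole in $G[V(A)\cup\{x\}]$ can look like, and to show that the only possibility is a $5$-cycle through $x$ of a very specific shape. The first step is to observe that $A$ itself contains no odd hole. This was already shown inside the proof of Theorem~\ref{thm:degrees}: if $A[Y]=C_r$ with $r\ge 5$, then $C[Y]$ is $(r-3)$-regular with maximum degree at most $2$. This forces $r=5$ and $C[Y]=C_5$, which is impossible for a proper induced subgraph of $C$ (here $2t+1\ge 7$). Hence any odd hole in $G[V(A)\cup\{x\}]$ passes through $x$. Such a hole has the form $x,P,x$, where $P$ is an induced path in $A$ whose two ends are the only vertices of $P$ adjacent to $x$.

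Next I would bound the length of induced paths in $A$. If $A[Y]$ is an induced path $P_r$, then $C[Y]=\overline{P_r}$ must have maximum degree at most $2$. For $r\ge 5$ an end of $P_r$ has degree $r-2\ge 3$ in $\overline{P_r}$, so $r\le 4$. The hole has length $r+1\ge 5$ and is odd, which forces $r=4$ and a hole of length exactly $5$. Write the path as $a,b,c,d$ in $A$. Its complement $\overline{P_4}$ is the path $c,a,d,b$, so $c,a,d,b$ are consecutive vertices along $C$. The condition on $x$ is that it is adjacent to $a$ and $d$ but not to $b$ or $c$. Since $a$ and $d$ have no other neighbors in $C$ besides $c,b$ and each other, $\{a,d\}$ is then a component of $C[X]$ of order two. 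This proves the ``if'' direction by contraposition.

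For the converse, suppose $\{a,d\}$ is a component of $C[X]$ of order two. Let $c$ and $b$ be the other $C$-neighbors of $a$ and $d$ respectively; they are distinct because $2t+1\ge 7$, and neither lies in $X$. The pairs $ab$, $bc$ and $cd$ are at distance $2$, $3$ and $2$ on $C$, so they are edges of $A$, while $ac$, $ad$ and $bd$ are edges of $C$ and hence non-edges of $A$. Therefore $x,a,b,c,d$ is an induced $C_5$, which is an odd hole.

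The only delicate point is making sure the case analysis on induced paths is exhaustive. In particular I must check that $a$ and $b$ (the two ends of $P$) must be non-adjacent for the cycle to have length at least $5$, and that $t\ge 3$ is exactly what keeps $b\ne c$ and the distances above valid. I expect the rest to be routine.
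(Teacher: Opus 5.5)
Your proposal is correct and follows essentially the same route as the paper: you rule out odd holes inside $A$, use the degree $r-2$ of an endpoint in $\overline{P_r}$ to limit induced paths in $A$ to four vertices, and for the converse you exhibit the induced $5$-cycle $x,a,b,c,d$, which is the paper's $x v_{i+1}v_{i+3}v_iv_{i+2}$. The only blemish is in your closing remark, where the two ends of $P$ should be $a$ and $d$, not $a$ and $b$.
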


\begin{proof}
First, $A$ has no odd hole. Indeed, if $A[Y]$ were an induced odd cycle of order $r\ge5$, then every vertex of $C[Y]$ would have degree $r-3$. Since $\Delta(C[Y])\le2$, we would have $r=5$ and $C[Y]\cong C_5$, which is impossible because every proper induced subgraph of $C$ is a forest and $|V(C)|\ge7$. Label the vertices of $C$ cyclically as $v_0,\ldots,v_{2t}$. Suppose first that $C[X]$ has a component of order two, say $v_{i+1}v_{i+2}$. Then $x$ is adjacent to $v_{i+1}$ and $v_{i+2}$ and non-adjacent to $v_i$ and $v_{i+3}$. In $A$, the vertices $v_{i+1},v_{i+3},v_i,v_{i+2}$ induce a path in this order. Hence
$$
xv_{i+1}v_{i+3}v_iv_{i+2}x
$$
is an induced odd hole.

Conversely, suppose that $G[V(A)\cup\{x\}]$ contains an odd hole $D$. Since $A$ has no odd hole, we have $x\in V(D)$. Then $D-x$ is an induced path in $A$. If this path has $p$ vertices, then its complement is an induced subgraph of $C$ isomorphic to $\overline{P_p}$. Since every induced subgraph of $C$ has maximum degree at most two and an endpoint of $P_p$ has degree $p-2$ in $\overline{P_p}$, we have $p\le4$. Since $D$ is an odd hole, $p+1$ is odd and at least five, so $p=4$. Thus the four vertices of $D-x$ are consecutive on $C$. The two internal vertices of this four-vertex path in $C$ are adjacent to $x$, while its two ends are not. Therefore these two internal vertices form a component of order two in $C[X]$. This proves the claim.
\end{proof}

We can now prove our sharpness result, which we restate here for convenience.

\setcounter{theorem}{4}
\begin{theorem} For every $k\geq 5$, there exists a $k$-critical odd-hole-free graph $G$ with maximum degree $\Delta=2k-4$, and containing an induced odd antihole $\overline{C_{2k-3}}=\overline{C_{2(\Delta-k)+5}}$, but no shorter induced odd antihole.
\end{theorem}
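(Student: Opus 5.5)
We use the construction $G=A\cup\{x,y\}$ below; the main obstacle is excluding shorter odd antiholes. Let $A=\overline{C_{2k-3}}$ and let $C=C_{2k-3}$ be the cycle whose complement is $A$, with vertices $v_0,\dots,v_{2k-4}$ in cyclic order. Add two new non-adjacent vertices $x,y$, with $x$ adjacent to $V(A)\setminus\{v_0\}$ and $y$ adjacent to $V(A)\setminus\{v_1\}$. I would choose $v_0,v_1$ consecutive on $C$, and allow a different pair if the antihole check below requires it.

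\emph{Degrees.} $x$ and $y$ each have degree $2k-4$. Each vertex of $A$ has $2k-6$ neighbours in $A$ plus at most two among $x,y$. So $\Delta=2k-4$.

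\emph{Chromatic number and criticality.} The key observation is the structure of $(k-1)$-colorings of $A$. Since $\alpha(A)=2$ and $|V(A)|=2(k-2)+1$, every such coloring has exactly one singleton class $\{u\}$. Its other classes are the edges of the unique perfect matching of the path $C-u$.
\begin{itemize}
\item In a $(k-1)$-coloring of $G$, the vertex $x$ sees every class except one contained in $\{v_0\}$, so it forces $u=v_0$.
\item Likewise $y$ forces $u=v_1$. Hence $\chi(G)=k$.
\item Deleting $x$, take $u=v_1$ and give $y$ the color of $v_1$; deleting $y$ is symmetric.
\item Deleting $v_i$, the path $C-v_i$ has $2k-4$ vertices and a perfect matching. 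This colors $A-v_i$ with $k-2$ colors, and the spare color goes on the independent pair $\{x,y\}$.
\end{itemize}
So $G$ is $k$-critical.

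\emph{Odd-hole-freeness.} For each of $x,y$ separately, $C[N(x)\cap V(A)]$ is a path on $2k-4\ge 6$ vertices. By Lemma \ref{lem:nbhd}, $G-y$ and $G-x$ are odd-hole-free. It remains to consider a hole $D$ through both $x$ and $y$. Then $D-\{x,y\}$ consists of two induced paths of $A$, each joining a neighbour of $x$ to a neighbour of $y$. Every vertex of $A$ is adjacent to $x$ or to $y$, and $D$ is induced, so each such path has no internal vertices. If a path were a single edge $ab$ with $a\sim x$, $b\sim y$, then $b\notin N(x)$ forces $b=v_0$, and $a\notin N(y)$ forces $a=v_1$. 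But $v_0v_1$ is a non-edge of $A$ when the two are consecutive on $C$. Thus each path is a single common neighbour, and $D$ is a $4$-cycle, which is even.

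\emph{No shorter odd antihole (main obstacle).} The antihole $A$ has length $2k-3$. Odd antiholes other than $C_5$ are excluded because $G$ is odd-hole-free. A shorter odd antihole $\overline{C_{2s+1}}$ with $3\le s<k-2$ cannot lie inside $A$, since every proper induced subgraph of $C$ is a forest. So it contains $x$ or $y$, and each of its vertices has exactly two non-neighbours inside it.

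Suppose it contains $x$. The non-neighbours of $x$ in $G$ are only $v_0$ and $y$, so the antihole contains both. The non-neighbours of $y$ in $G$ are only $v_1$ and $x$, so it contains $v_1$ as well. I would then trace the complementary cycle $x\,v_0\,\cdots\,v_1\,y\,x$. Its remaining vertices lie in $A$ and must form a path in $C$ from a $C$-neighbour of $v_0$ to a $C$-neighbour of $v_1$, avoiding both. With $v_0,v_1$ consecutive, that path is forced to be all of $C$ minus $\{v_0,v_1\}$. The resulting antihole then has length $2k-1$, not shorter, so no shorter odd antihole exists.

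I expect this tracing step to need the most care, including settling parity and whether $v_0,v_1$ should be adjacent or at some other distance on $C$ so that none of these cycles has odd length below $2k-3$. For $k=5$ it is immediate, since $2k-3=7$ is already the minimum possible length.
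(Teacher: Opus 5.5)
Your construction differs from the paper's, and it works: your checks of $\Delta=2k-4$, $\chi(G)=k$, criticality and odd-hole-freeness are correct. The odd-hole step genuinely uses that $v_0,v_1$ are consecutive on $C$. If $y$ missed $v_2$ instead, then $x\,v_2\,v_0\,y\,v_1\,x$ would be a $5$-hole, so keep your choice and drop the hedge about changing the pair.

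The last step, however, is wrong as written, though harmlessly so. With $s=k-2$, the cycle $x\,v_0\,v_{2s}\cdots v_2\,v_1\,y\,x$ is not the complement of an antihole, because $v_0v_1$ is a chord of it in $\overline{G}$. Equivalently, $v_{2s}$ would be a third non-neighbour of $v_0$. The correct finish is shorter. Once $x,y,v_0,v_1$ lie in the putative antihole, $v_0$ already has its two non-neighbours $x$ and $v_1$ there. So the complementary cycle closes up as $x\,v_0\,v_1\,y\,x$, which has length $4$. Hence no odd antihole meets $\{x,y\}$ at all, and $A$ is the unique odd antihole of $G$. Indeed $\overline{G}$ is the theta graph formed by $C$ and the path $v_0\,x\,y\,v_1$; its only induced cycles are $C$ and this $4$-cycle.

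The paper instead lets $x$ miss $\{v_1,v_3\}$ and $y$ miss $\{v_0,v_2\}$, so that $d_G(x)=d_G(y)=2s-1$. That choice costs three extra pieces of work:
\begin{itemize}
\item a separate count showing that a hole through both $x$ and $y$ has at most six vertices;
\item a by-hand exclusion of a $5$-cycle $x\,a\,y\,b\,c\,x$;
\item a path enumeration in $\overline{G}$, which also turns up a second odd antihole of length $2s+1$, on the vertices of the $\overline{G}$-cycle $x\,y\,v_0\,v_{2s}\cdots v_3\,x$.
\end{itemize}
Your choice makes $x$ and $y$ degree-$2$ vertices of $\overline{G}$. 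That makes both the hole check and the antihole check nearly immediate, and the statement needs nothing more.
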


\begin{proof} Let $k\ge5$, and put $s=k-2$. Label the vertices of $C_{2s+1}$ cyclically as $v_0,\ldots,v_{2s}$. Let $A=\overline{C_{2s+1}}$, and obtain $G$ from $A$ by adding two non-adjacent vertices $x$ and $y$ such that
$$
N_G(x)\cap V(A)=V(A)\setminus\{v_1,v_3\}
$$
and
$$
N_G(y)\cap V(A)=V(A)\setminus\{v_0,v_2\}.
$$

We first show that $\chi(G)=k$. Since $\chi(A)=s+1$, coloring $A$ with $s+1$ colors and giving $x$ and $y$ one new common color shows that $\chi(G)\le s+2$. In every $(s+1)$-coloring of $A$, one color class is a singleton and the other $s$ color classes are edges of $C_{2s+1}$. Since $v_1$ and $v_3$ are adjacent in $A$, the vertex $x$ can receive a color used on $A$ only when the singleton is $v_1$ or $v_3$. Similarly, $y$ can receive a color used on $A$ only when the singleton is $v_0$ or $v_2$. These two sets are disjoint, so $G$ has no $(s+1)$-coloring. Hence
$$
\chi(G)=s+2=k.
$$

We now show that the graph $G$ is $k$-critical. Indeed, the perfect matching of $C_{2s+1}-v_0$ gives an $(s+1)$-coloring of $A$ in which $v_0$ is the unique singleton, so this coloring extends to $G-x$ by giving $y$ the color of $v_0$. Similarly, an $(s+1)$-coloring of $A$ with singleton $v_1$ extends to $G-y$. If $v\in V(A)$, then the perfect matching of $C_{2s+1}-v$ gives an $s$-coloring of $A-v$, and giving $x$ and $y$ one new common color gives an $(s+1)$-coloring of $G-v$.

We now compute the maximum degree of $G$. To this end, note that every vertex of $A$ has degree $2s-2$ in $A$. The vertices $v_0,v_1,v_2,v_3$ gain one neighbor, every other vertex of $A$ gains two neighbors, and
$d_G(x)=d_G(y)=2s-1.$
Therefore
$\Delta(G)=2s=2k-4.$

We now show that the graph $G$ has no odd hole. First, 
$$C[N_G(x)\cap V(A)]= C[V(A)\setminus \{v_1, v_3\}]$$
has no component of order two; likewise
$$C[N_G(y)\cap V(A)]= C[V(A)\setminus \{v_0, v_2\}]$$
has no component of order two. Thus, by Lemma \ref{lem:nbhd}, no odd hole contains exactly one of $x$ and $y$. Suppose that an odd hole contains both. Since $x$ and $y$ are non-adjacent, each has exactly two neighbors on the hole. Since each has only two non-neighbors in $A$, the hole has at most six vertices, and hence, being odd, it must be a $5$-cycle. Write it as
$$
xaybcx.
$$
Then $b\in\{v_1,v_3\}$, $c\in\{v_0,v_2\}$, and $a$ is adjacent in $C_{2s+1}$ to both $b$ and $c$. This is impossible, since no vertex of $C_{2s+1}$ has one neighbor in each of the sets $\{v_1,v_3\}$ and $\{v_0,v_2\}$.

Finally, we show that $G$ contains no smaller odd antihole. In $\overline{G}$, the graph induced by $V(A)$ is $C_{2s+1}$, and the remaining edges are
$$
xy,\quad xv_1,\quad xv_3,\quad yv_0,\quad yv_2.
$$
Let $D$ be an induced odd cycle in $\overline{G}$ of order at least five. If $D$ contains neither $x$ nor $y$, then $D=C_{2s+1}$. If it contains exactly $x$, then its two neighbors on $D$ are $v_1$ and $v_3$; the two $v_1$--$v_3$ paths in $C_{2s+1}$ give a $4$-cycle or a cycle of order $2s+1$. The same holds if $D$ contains exactly $y$. If $D$ contains both $x$ and $y$, then $xy\in E(D)$. The remaining path has one end in $\{v_1,v_3\}$ and the other in $\{v_0,v_2\}$. The three adjacent endpoint pairs give $4$-cycles, while the pair $v_3,v_0$ gives the cycle
$$
xyv_0v_{2s}v_{2s-1}\cdots v_3x
$$
of order $2s+1$; every other path has a chord from $x$ or $y$. Thus every induced odd cycle of $\overline{G}$ of order at least five has order $2s+1$. Equivalently, $G$ contains no induced odd antihole $\overline{C_{2q+1}}$ with $3\leq q<s$. Moreover, $G$ contains no $\overline{C}_5=C_5$, since we have already shown that $G$ is odd-hole-free. Hence $A=\overline{C_{2s+1}}$ is a shortest induced odd antihole in $G$.
Since $\Delta=2k-4$,
$$
\Delta-k+2=k-2=s.
$$
So  $2s+1=2k-3=2(\Delta-k)+5$ and both terms of the bound are attained.
\end{proof}

\end{document}